\newtheorem{theorem}{Theorem}[section]
\newtheorem{lemma}[theorem]{Lemma}
\newtheorem*{theorem*}{Theorem}
\newtheorem{corollary}[theorem]{Corollary}
\newtheorem{proposition}[theorem]{Proposition}
\newtheorem{sublemma}{}[theorem]
\newtheorem{conjecture}[theorem]{Conjecture}
\theoremstyle{definition}
\theoremstyle{remark}
\numberwithin{equation}{section}
\newcommand\Ga{\mathcal{G}^\mathrm{add}}
\newcommand\Ge{\mathcal{G}^\mathrm{epex}}
\begin{document}

\title[Edge-add graphs]{Structural Bounds and Forbidden Induced Subgraphs for Edge-Add Graph Classes}

\author[Singh]{Jagdeep Singh}
\address{Department of Mathematics and Statistics\\
Mississippi State University \\
Mississippi State, Mississippi}
\email{singhjagdeep070@gmail.com}

\author[Sivaraman]{Vaidy Sivaraman}
\address{Department of Mathematics and Statistics\\
Mississippi State University \\
Mississippi State, 
Mississippi}
\email{vaidysivaraman@gmail.com}

\subjclass[2010]{05C75}
\date{\today}
\keywords{hereditary class, forbidden induced subgraph, edge-add class, edge-apex class}

\begin{abstract}
A class $\mathcal{G}$ of graphs is hereditary if it is closed under taking induced subgraphs. We investigate the edge-add class, $\Ga$, consisting of graphs that can be made members of $\mathcal{G}$ by adding at most one edge. While it is known that the operations of vertex deletion and edge deletion preserve the finiteness of forbidden induced subgraphs for classes with finite exclusions, the behavior of edge addition on classes with infinite exclusions remains largely unexplored.

We characterize the edge-add class of chordal graphs by their forbidden induced subgraphs and extend the result to a general finiteness theorem: for any fixed $p\ge0$, the set of forbidden induced subgraphs for $p$-edge-add chordal graphs that are not cycles is finite. In contrast, we show that this phenomenon does not extend to perfect graphs. Furthermore, we provide explicit structural bounds proving that edge addition preserves finiteness for base classes with finitely many exclusions. We conclude by providing the complete structural characterizations and explicit minimal obstruction lists for the edge-add classes of split and threshold graphs, and generalize these results to $(p,q)$-edge split graphs.
\end{abstract}

\maketitle

\section{Introduction}

We consider simple, finite, and undirected graphs. The notation and terminology follow \cite{text2}. An {\bf induced subgraph} of a graph $G$ is a graph $H$ that can be obtained from $G$ by a sequence of vertex deletions. Specifically, $G-v$ is the graph obtained from $G$ by deleting a vertex $v$ of $G$. For a subset $T$ of $V(G)$, we denote by $G[T]$ the induced subgraph on the vertex set $T$. We say that $G$ {\it contains} $H$ if $H$ is an induced subgraph of $G$. 

A class $\mathcal{G}$ of graphs is called {\bf hereditary} if it is closed under taking induced subgraphs. For a hereditary class $\mathcal{G}$, we call a graph $H$ a {\bf forbidden induced subgraph} for $\mathcal{G}$ if $H$ is not in $\mathcal{G}$ but every proper induced subgraph of $H$ is in $\mathcal{G}$. 

It is a well-known consequence of the Robertson-Seymour theorem that every graph class closed under minors contains only finitely many forbidden minors \cite{text2}. However, the situation with induced subgraphs is vastly different: certain hereditary graph classes afford a finite set of forbidden induced subgraphs, while others require an infinite list. A natural question is understanding which operations on graph classes preserve the finiteness of their forbidden induced subgraphs, and conversely, how these operations behave on classes with infinite exclusions. 

For a graph $G$, the graph $\overline{G}$ denotes the complement of $G$. For an edge $e$ of $G$, $G-e$ is the graph obtained by deleting $e$. For a non-edge $e$, $G+e$ is the graph obtained by adding $e$. We study the {\bf edge-add class, $\Ga$}, defined as the class of graphs $G$ such that $G \in \mathcal{G}$, or $G$ has a non-edge $e$ such that $G+e \in \mathcal{G}$. More generally, for any integer $p \ge 1$, the {\bf $p$-edge-add class} consists of graphs at most $p$ edge additions away from $\mathcal{G}$. Similarly, the {\bf edge-apex class, $\Ge$}, consists of graphs at most one edge deletion away from $\mathcal{G}$. 

It is known that modifying a hereditary class by allowing a single vertex deletion (the \textit{vertex-apex class}, denoted $\mathcal{G}(1)$) preserves the finiteness of forbidden induced subgraphs, as shown by Borowiecki, Drgas-Burchardt, and Sidorowicz \cite{boro}. While it is easy to observe that both $\Ge \subseteq \mathcal{G}(1)$ and $\Ga \subseteq \mathcal{G}(1)$, the finiteness of obstructions for a superclass does not imply the finiteness of obstructions for a subclass. The authors in \cite{vsin} showed that if $\mathcal{G}$ has finitely many forbidden induced subgraphs, then so does $\Ge$. 
In Section 2, we establish the corresponding result and strict structural bounds for $\Ga$ via complementation.

\begin{theorem}
\label{edge_add_bound_intro}
Let $\mathcal{G}$ be a hereditary class of graphs, and let $c$ and $t$ denote the maximum number of vertices and non-edges, respectively, over all forbidden induced subgraphs for $\mathcal{G}$. If $G$ is a forbidden induced subgraph for $\Ga$, then $|V(G)| \leq \max{\{2c, c + t(c-2)\}}$. 
\end{theorem}

While the behavior of these local modifications is thus resolved for classes with finite obstructions, their effect on classes with infinite obstructions remains largely unexplored. A graph is \textbf{chordal} if it contains no induced cycle of length four or greater. The class of chordal graphs has infinitely many forbidden induced subgraphs. One of our main results, Theorem \ref{edge_add_chordal_intro}, is that the set of forbidden induced subgraphs for the edge-add class of chordal graphs that are not cycles is finite. We prove the following in Section 3.

\begin{theorem}
\label{edge_add_chordal_intro}
Let $G$ be a forbidden induced subgraph for the class of edge-add chordal graphs. Then $G$ is a cycle of length at least five, or $6 \leq |V(G)| \leq 8$, and $G$ is isomorphic to one of the finite set of graphs in Figures \ref{chordal_6_7} and \ref{chordal_8}.
\end{theorem}

Remarkably, this extends to $p$-edge-add chordal graphs for any fixed $p \geq 0$, proving that modifying chordal graphs by any finite number of edge additions yields a strictly finite set of non-cycle obstructions.

\begin{theorem}
\label{p_edge_add_chordal_intro}
Let $\mathcal{F}$ be the set of all forbidden induced subgraphs for the class of $p$-edge-add chordal graphs that are not cycles. Then $\mathcal{F}$ is finite. 
\end{theorem}

The \textit{chromatic number} of a graph $G$ is denoted by $\chi(G)$, and its \textit{clique number} (size of its largest clique) is denoted by $\omega(G)$. A graph $G$ is \textbf{perfect} if $\chi(H) = \omega(H)$ for all induced subgraphs $H$ of $G$. In contrast, we demonstrate that this finiteness phenomenon does not extend to the edge-add class of perfect graphs. 

Finally, in Section 4, we provide full structural characterizations and explicit minimal obstructions for the edge-add and edge-apex classes of two classic families defined by finitely many exclusions: split graphs and threshold graphs. While a direct application of Theorem \ref{edge_add_bound_intro} to split graphs establishes an upper bound of $20$ for the number of vertices in a forbidden induced subgraph, we analytically establish the following.

\begin{theorem}
\label{split_intro}
Let $G$ be a forbidden induced subgraph for the class of edge-add split graphs. Then $5 \leq |V(G)| \leq 8$, and $G$ is isomorphic to one of the graphs in Figures \ref{split_5}, \ref{split_6}, and \ref{split_7_8}.
\end{theorem}

We also prove the following for threshold graphs.

\begin{theorem}
\label{edge_add_threshold_intro}
Let $G$ be a forbidden induced subgraph for the class of edge-add threshold graphs. Then $4 \leq |V(G)| \leq 8$, and $G$ is isomorphic to one of the graphs in Figure \ref{threshold}. 
\end{theorem}

Because the operations of edge-addition and edge-deletion commute, we also extend these characterizations to $(p,q)$-edge split graphs (graphs at most $p$ edge-additions and $q$ edge-deletions away from a split graph), establishing that this generalized class also maintains a finite obstruction set.

\section{The Edge-Add Class and Structural Bounds}

For a class $\mathcal{G}$ of graphs, let $\overline{\mathcal{G}} := \{\overline{G} : G \in \mathcal{G}\}$. It is immediate that if $\mathcal{G}$ is hereditary, then so is $\overline{\mathcal{G}}$. The proofs of following two propositions follow directly from definitions. 
 
 \begin{proposition}
 \label{prop1}
 $F \not \in \mathcal{G}$ if and only if $\overline{F} \not \in \overline{\mathcal{G}}$.
 \end{proposition}

\begin{proposition}
\label{prop2}
Let $\mathcal{G}$ be a hereditary class of graphs. Then $H$ is a forbidden induced subgraph for $\Ga$ if and only if $\overline{H}$ is a forbidden induced subgraph for $\overline{\mathcal{G}}^{\mathrm{epex}}$.
\end{proposition}

An immediate consequence of Proposition \ref{prop2} is the following.

\begin{corollary}
\label{complementation}
Let $\mathcal{G}$ be a hereditary class of graphs that is closed under complementation. Then $H$ is a forbidden induced subgraph for $\Ga$ if and only if $\overline{H}$ is a forbidden induced subgraph for $\Ge$.
\end{corollary}

The following from \cite{vsin} provides the bound on the number of vertices of a forbidden induced subgraph for $\Ge$. 

\begin{theorem}
\label{previous_bound}
Let $\mathcal{G}$ be a hereditary class of graphs and let $c$ and $k$ denote the maximum number of vertices and edges in a forbidden induced subgraph for $\mathcal{G}$. If $G$ is a forbidden induced subgraph for $\Ge$, then $|V(G)| \leq \max{\{2c, c + k(c-2)\}}$. 
\end{theorem}

The following is immediate from Proposition \ref{prop2} and Theorem \ref{previous_bound}.

\begin{theorem}
\label{edge_add_bound}
Let $\mathcal{G}$ be a hereditary class of graphs and let $c$ and $t$ denote the maximum number of vertices and non-edges in a forbidden induced subgraph for $\mathcal{G}$. If $G$ is a forbidden induced subgraph for $\Ga$, then $|V(G)| \leq \max{\{2c, c + t(c-2)\}}$. 
\end{theorem}

For a fixed $q \geq 0$ and a hereditary class $\mathcal{G}$ of graphs, we say a graph $G$ is in the {\bf $q$-edge-apex class} of $\mathcal{G}$ if a graph in $\mathcal{G}$ can be obtained from $G$ by deleting at most $q$ edges of $G$. Similarly, for a fixed $p \geq 0$, a graph is in the {\bf $p$-edge-add class} of  $\mathcal{G}$ if a graph in $\mathcal{G}$ can be obtained from $G$ by adding at most $p$ non-edges of $G$.

By the repeated application of Theorem \ref{previous_bound}, we obtain the following.

\begin{corollary}
\label{q_edge_apexing}
Let $\mathcal{G}$ be a hereditary class of graphs. If $\mathcal{G}$ has a finite number of forbidden induced subgraphs, then so does its $q$-edge-apex class.
\end{corollary}

Similarly, we obtain the following from Theorem \ref{edge_add_bound}. 

\begin{corollary}
\label{p_edge_add}
Let $\mathcal{G}$ be a hereditary class of graphs. If $\mathcal{G}$ has a finite number of forbidden induced subgraphs, then so does its $p$-edge-add class.
\end{corollary}

\section{Forbidden induced subgraphs for Edge-Add chordal graphs}

A graph $G$ is an {\bf edge-add chordal graph} if $G$ is in $\Ga$, where $\mathcal{G}$ is the class of chordal graphs. Recall that a graph is chordal if it contains no induced cycle of length four or greater. 

Because the class of chordal graphs does not have a finite list of forbidden induced subgraphs, we cannot apply the general finiteness results. Therefore, before proving our main characterization in Theorem \ref{edge_add_chordal}, we first establish three general preliminary results regarding the forbidden induced subgraphs of $\Ga$ for an arbitrary hereditary class $\mathcal{G}$.

\begin{lemma}
\label{unique_FIS_lemma_disjoint_case}
Let $\mathcal{G}$ be a hereditary class of graphs and let $G$ be a forbidden induced subgraph for $\Ga$. If $G$ contains two forbidden induced subgraphs $H_1$ and $H_2$ for $\mathcal{G}$ such that $V(H_1) \cap V(H_2)$ is empty or $G[V(H_1) \cap V(H_2)]$ is a clique, then $V(G) = V(H_1) \cup V(H_2)$.
\end{lemma}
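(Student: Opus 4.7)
The plan is to argue by contradiction. Suppose there exists a vertex $v \in V(G) \setminus (V(H_1) \cup V(H_2))$. Since $G$ is a forbidden induced subgraph for $\Ga$, every proper induced subgraph of $G$ lies in $\Ga$; in particular $G - v \in \Ga$. By the definition of $\Ga$, one of the following holds: either (i) $G - v \in \mathcal{G}$, or (ii) there is a non-edge $e$ of $G - v$ such that $(G - v) + e \in \mathcal{G}$.

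First I would dispose of case (i). Since $v \notin V(H_1)$, the graph $H_1$ is an induced subgraph of $G - v$. As $\mathcal{G}$ is hereditary and $H_1$ is a forbidden induced subgraph for $\mathcal{G}$, we have $H_1 \notin \mathcal{G}$, and therefore $G - v \notin \mathcal{G}$. This rules out (i).

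For case (ii), the key observation is that a single edge addition to $G - v$ can destroy $H_i$ as an induced subgraph only if both endpoints of the added edge lie in $V(H_i)$: indeed, if some endpoint of $e$ is outside $V(H_i)$, then the induced subgraph of $(G - v) + e$ on $V(H_i)$ is still exactly $H_i$, forcing $H_i \in \mathcal{G}$ by heredity, a contradiction. Applying this to $i = 1$ and $i = 2$, both endpoints of $e$ must lie in $V(H_1) \cap V(H_2)$. By hypothesis, this intersection is either empty -- impossible, since $e$ has two endpoints there -- or induces a clique in $G$, in which case there is no non-edge of $G - v$ with both endpoints in $V(H_1) \cap V(H_2)$. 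Either way we obtain a contradiction, completing the proof.

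There is no serious obstacle here; the argument is forced once one recalls that a hereditary class is characterized by avoidance of its forbidden induced subgraphs. The only subtle point to track carefully is that adding an edge $e$ to $(G-v)$ changes the induced subgraph on $V(H_i)$ only when both endpoints of $e$ sit inside $V(H_i)$, which is exactly what makes the clique/empty-intersection hypothesis on $V(H_1) \cap V(H_2)$ sufficient to finish.
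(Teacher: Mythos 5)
Your proof is correct and follows the same contradiction argument as the paper, which simply asserts that $G-x$ is not in $\Ga$ without elaboration; you have supplied exactly the omitted details (heredity rules out $G-v\in\mathcal{G}$, and any repairing edge would need both endpoints in $V(H_1)\cap V(H_2)$, which is impossible when that intersection is empty or induces a clique).
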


\begin{proof}
 Suppose not, and let $x$ be a vertex in $V(G) - (V(H_1) \cup V(H_2))$. Because $V(H_1) \cap V(H_2)$ is a clique or empty, $H_1$ and $H_2$ share no non-edges. Consequently, no single edge addition can simultaneously destroy both $H_1$ and $H_2$. Therefore, $G-x \notin \mathcal{G}^{add}$, which contradicts the minimality of $G$. It follows that $V(G) = V(H_1) \cup V(H_2)$. 
\end{proof}

\begin{lemma}
\label{unique_FIS_lemma}
Let $\mathcal{G}$ be a hereditary class of graphs and let $G$ be a forbidden induced subgraph for $\Ga$. If a forbidden induced subgraph $H$ for $\mathcal{G}$ is contained in $G$, then for every non-edge $h$ of $H$, we have a subset $F_h$ of $V(G)$ such that $(G+h)[F_h]$ is a forbidden induced subgraph for $\mathcal{G}$. Moreover, $V(G) = V(H) \cup \bigcup_{h \in E(\overline{H})} F_h$. 
\end{lemma}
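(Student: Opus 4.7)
The plan is to construct the sets $F_h$ by unpacking what it means for $G$ to be a forbidden induced subgraph for $\Ga$, and then to prove the covering statement $V(G)=V(H)\cup\bigcup_h F_h$ by a minimality argument mirroring Lemma~\ref{unique_FIS_lemma_disjoint_case}.

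First I would handle the existence of $F_h$. For any non-edge $h$ of $H$, the pair $h$ is also a non-edge of $G$ because $H$ is an induced subgraph of $G$. Since $G\notin\Ga$, the definition of $\Ga$ forces $G+h\notin\mathcal{G}$, and $\mathcal{G}$ being hereditary means $G+h$ contains some forbidden induced subgraph for $\mathcal{G}$; its vertex set is the required $F_h\subseteq V(G)$.

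The main content of the lemma is the covering statement, which I would prove by contradiction in the style of Lemma~\ref{unique_FIS_lemma_disjoint_case}. Suppose $x\in V(G)\setminus\bigl(V(H)\cup\bigcup_{h\in E(\overline H)}F_h\bigr)$. By minimality of $G$ as a forbidden induced subgraph for $\Ga$, $G-x\in\Ga$; and since $G-x$ still contains $H$, it is not in $\mathcal{G}$, so there exists a non-edge $e$ of $G-x$ with $(G-x)+e\in\mathcal{G}$. I would then split on the location of the endpoints of $e$. If both endpoints of $e$ lie in $V(H)$, then $e$ is one of the non-edges $h$ of $H$, and since $x\notin F_h$ we have $F_h\subseteq V(G-x)$; therefore the induced subgraph of $(G-x)+e$ on $F_h$ coincides with $(G+h)[F_h]$, which is a forbidden induced subgraph for $\mathcal{G}$, contradicting $(G-x)+e\in\mathcal{G}$. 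If at least one endpoint of $e$ lies outside $V(H)$, then adding $e$ does not alter the induced subgraph on $V(H)$, so $H$ remains an induced subgraph of $(G-x)+e$, again contradicting $(G-x)+e\in\mathcal{G}$.

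The only subtlety I anticipate is the case analysis on the endpoints of the hypothetical witness edge $e$; the disjoint/clique hypothesis from Lemma~\ref{unique_FIS_lemma_disjoint_case} is not available here, so the argument has to absorb the extra flexibility via the $F_h$'s. Once the case split is set up, each branch is a one-line contradiction, so the proof is short and no further bookkeeping is needed.
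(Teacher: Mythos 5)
Your proof is correct and follows essentially the same route as the paper: obtain each $F_h$ from the fact that $G\notin\Ga$ forces $G+h\notin\mathcal{G}$, then derive a contradiction from a vertex $x$ outside $V(H)\cup\bigcup_h F_h$ by showing $G-x\notin\Ga$. The paper compresses your case analysis on the witness non-edge $e$ into the single assertion that $G-x$ is not in $\Ga$; your version simply makes that step explicit.
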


\begin{proof}
Observe that for every non-edge $h$ of $H$, the graph $G+h$ is not in $\mathcal{G}$ so we have a subset $F_h$ of $V(G)$ such that $(G+h)[F_h]$ is a forbidden induced subgraph for $\mathcal{G}$. If there is a vertex $x$ in $V(G)- (V(H) \cup \bigcup F_h )$, then $G-x$ is not in $\Ga$, a contradiction. Therefore, $V(G) = V(H) \cup \bigcup_{h \in E(\overline{H})} F_h$.
\end{proof}

\begin{proposition}
\label{unique_FIS}
Let $\mathcal{G}$ be a hereditary class of graphs, and let $\mathcal{H}$ be a finite subset of the set of forbidden induced subgraphs for $\mathcal{G}$. For a graph $H$ in $\mathcal{H}$, let $c$ and $k$ be the number of vertices and the number of non-edges of $H$, respectively and let $m$ denote the maximum number of vertices in a graph in $\mathcal{H}$. If $G$ is a forbidden induced subgraph for $\mathcal{G}^{add}$ containing $H$, and for every non-edge $h \in E(\overline{H})$ the induced subgraph $(G+h)[F_h]$ belongs to $\mathcal{H}$, then $|V(G)| \leq \max{\{c + m, c + k(m-2)\}}$.
\end{proposition}

\begin{proof}
First note that if $G$ contains two forbidden induced subgraphs $H$ and $H'$ for $\mathcal{G}$ such that $V(H) \cap V(H')$ is empty or $G[V(H) \cap V(H')]$ is complete, then by Lemma \ref{unique_FIS_lemma_disjoint_case},  $|V(G)| \leq c+m$.

By Lemma \ref{unique_FIS_lemma}, it follows that $V(G) = V(H) \cup \bigcup_{h \in E(\overline{H})} F_h$. Observe that, if $|V(H) \cap F_h| \leq 1$, then $G[V(H) \cap F_h]$ is a clique and thus $|V(G)| \leq c+m$. So for every non-edge $h$ of $H$, we have $|V(H) \cap F_h| \geq 2$. Therefore, for every non-edge $h$ of $H$, the set $F_h$ contributes at most $m-2$ vertices outside of $V(H)$. Since $H$ has $k$ non-edges, the total number of vertices in $G$ is bounded by $|V(H)| + k(m-2) = c + k(m-2)$. Combining this with the disjoint case, we obtain $|V(G)| \le \max\{c+m, c+k(m-2)\}$.
\end{proof}

\begin{theorem}
\label{edge_add_chordal}
Let $G$ be a forbidden induced subgraph for the class of edge-add chordal graphs. Then $G$ is a cycle of length at least five, or $6 \leq |V(G)| \leq 8$, and $G$ is isomorphic to one of the graphs in Figures \ref{chordal_6_7} and \ref{chordal_8}.
\end{theorem}

\begin{proof}
Since $G$ is not a chordal graph, it must contain a cycle of length at least four. Observe that a cycle of length at least five is not an edge-add chordal graph. It follows that if $G$ contains a cycle $C$ of length at least five, then $G=C$, and the result follows so assume that $G$ contains no cycle of length greater than $4$. Therefore $G$ contains an induced cycle $C$ of length four, say $C = abcda$. 

Note that, by Lemma \ref{unique_FIS_lemma}, corresponding to the non-edges $ac$ and $bd$ of $C$, we have subsets $C_{ac}$ and $C_{bd}$ of $V(G)$ of size at least four such that $(G+ac)[C_{ac}]$ and $(G+bd)[C_{bd}]$ are cycles. By Lemma \ref{unique_FIS_lemma}, it follows that $V(G) = \{a,b,c,d\} \cup C_{ac} \cup C_{bd}$. Note that if $G[C_{ac}]$ is a cycle, then $C_{ac}$ has size four. Moreover, if $|C_{ac} \cap V(C)| < 2$, then by Lemma \ref{unique_FIS_lemma_disjoint_case}, we obtain $|V(G)| \leq 8$. So we may assume that $|C_{ac} \cap V(C)| \geq 2$. Since the same holds for $C_{bd}$, it follows that $|V(G)| \leq 8$. Therefore we may assume that both $G[C_{ac}]$ and $G[C_{bd}]$ are not cycles.

It follows that $G[C_{ac}]$ and $G[C_{bd}]$ are paths $P_{ac}$ and $P_{bd}$, respectively of length at least three, say $P_{ac}= ax_1x_2 \ldots c$ and $P_{bd}= by_1y_2 \ldots d$. We call the vertices of a path other than the endpoints \textit{internal}. Note that $b$ is adjacent to all the internal vertices of $P_{ac}$. Suppose not. Then we obtain an induced cycle of length greater than four in $G$, or an induced cycle $D$ of length four in $G$. Because no internal vertex of $P_{ac}$ connects to both $a$ and $c$, the intersection $V(C) \cap V(D)$ is restricted to at most $\{b\}$, $\{a,b\}$, or $\{b,c\}$, all of which induce cliques in $G$. The result then follows by Lemma \ref{unique_FIS_lemma_disjoint_case}. By symmetry, $d$ is also adjacent to all the internal vertices of $P_{ac}$. By a similar argument, both $a$ and $c$ are adjacent to all the internal vertices of $P_{bd}$. 

Observe that if both paths $P_{ac}$ and $P_{bd}$ have exactly two internal vertices, then $|V(G)| \leq 8$, so assume $P_{ac}$ has at least three internal vertices, say $x_1, x_2$, and $x_3$. Because $b$ and $d$ are adjacent to all internal vertices of $P_{ac}$, the sets $\{b, x_1, d, x_3\}$ and $\{a, b, x_3, d\}$ both induce 4-cycles in $G-c$. To destroy these multiple induced 4-cycles with a single edge addition, the added edge must be their only common non-edge, namely $bd$. However, because $P_{bd}$ has length at least three, it cannot be the path $bcd$, that is, $P_{bd}$ does not contain $c$ and so remains a path in $G-c$. Adding the edge $bd$ closes $P_{bd}$ into an induced cycle of length at least four. Consequently, no single edge addition can make $G-c$ chordal so $G-c$ is not an edge-add chordal graph. This is a contradiction to the minimality of $G$.

Because our structural proofs establish that any minimal forbidden induced subgraph for this class has at most 8 vertices, the question of completeness is reduced to a finite search. To verify the exact structures of these obstructions, we implemented the following exhaustive search algorithm using SageMath \cite{sage} and nauty \cite{nauty} and list them in Figures \ref{chordal_6_7} and \ref{chordal_8}. 
\end{proof}

\begin{algorithm}
\label{pseudocode2}
\caption{Exhaustive verification of obstructions for $\Ga$}
\begin{algorithmic}
\STATE Set FinalList $\leftarrow \emptyset$
\STATE Generate all graphs of order $n \in \{5,6,7,8\}$ using nauty geng \cite{nauty} and store in an iterator $L$

\FOR{$g$ in $L$ such that $g$ is not in $\mathcal{G}$}
    \STATE Set $i \leftarrow 0$, $j \leftarrow 0$

    \FOR{$e$ in $E(\overline{g})$}
        \STATE $h = g + e$
        \IF{$h$ is not in $\mathcal{G}$}
            \STATE $i \leftarrow i+1$
        \ENDIF
    \ENDFOR

    \FOR{$v$ in $V(g)$}
        \STATE $k = g - v$
        \IF{$k$ is in $\mathcal{G}$ or $k+e$ is in $\mathcal{G}$ for some non-edge $e$ of $k$}
            \STATE $j \leftarrow j+1$
        \ENDIF
    \ENDFOR

\IF{$i$ equals $|E(g)|$ and $j$ equals $|V(g)|$}

\STATE Add $g$ to FinalList

\ENDIF

\ENDFOR

\STATE Return FinalList

\end{algorithmic}
\end{algorithm}

\begin{theorem}
\label{p_edge_add_chordal}
Let $\mathcal{F}$ be the set of all forbidden induced subgraphs for the class of $p$-edge-add chordal graphs that are not cycles. Then $\mathcal{F}$ is finite. 
\end{theorem}

\begin{proof}
We proceed by induction on $p$. By Theorem \ref{edge_add_chordal}, the result holds when $p \in \{0,1\}$ so assume $p \geq 2$. Observe that the class of $p$-edge-add chordal graphs is precisely the edge-add class of $(p-1)$-edge-add chordal graphs. Assume the result holds for $(p-1)$-edge-add chordal graphs, and let $\mathcal{H}$ be its finite set of non-cycle forbidden induced subgraphs. 

Let $G$ be a non-cycle forbidden induced subgraph for the class of $p$-edge-add chordal graphs. Note that an induced cycle of length at least $p+4$ is not a $p$-edge-add chordal graph since it requires at least $p+1$ edge additions to become chordal. It follows that if $G$ contains an induced cycle $C$ of length at least $p+4$, then $G=C$. Therefore we may assume that $G$ contains no induced cycle of length $\ge p+4$. Let $K = p+4$.

We note the following. 

\begin{sublemma}
\label{diameter}
Let $x,y$ be a pair of vertices of $G$ at a distance $d$. Then any induced path between $x$ and $y$ has length at most $d(K-2)$. 
\end{sublemma}

Because $G$ is not $(p-1)$-edge-add chordal, it must contain a forbidden induced subgraph $H$ for the class of $(p-1)$-edge-add chordal graphs. Since an induced cycle of length at most $p+2$ is a $(p-1)$-edge-add chordal graph, it follows that $H$ is in $\mathcal{H}$, or $H$ is a cycle of length $p+3$. In either case, the number of vertices in $H$ is  bounded by a constant $m$. 

By Lemma \ref{unique_FIS_lemma}, for every non-edge $h \in E(\overline{H})$, there exists a vertex subset $F_h \subseteq V(G)$ such that $(G+h)[F_h]$ is a forbidden induced subgraph for $(p-1)$-edge-add chordal graphs, and $V(G) = V(H) \cup \bigcup_{h \in E(\overline{H})} F_h$. 

Note that if $H$ is disconnected and $h = uv$ is a non-edge with endpoints in distinct connected components of $H$, then adding $h$ does not create any new cycles, nor does it triangulate any existing cycles in $H$. Consequently, $(G+h)[V(H)]$ remains a forbidden induced subgraph for $(p-1)$-edge-add chordal graphs, that is we can simply take $F_h \subseteq V(H)$. Therefore, to account for the vertices of $G$ outside of $H$, we only need to focus on the subsets $F_h$ corresponding to non-edges $h$ whose endpoints lie within the same connected component of $H$. Let $E^*(\overline{H})$ denote the set of non-edges of $H$ whose endpoints belong to the same connected component. We can then refine our vertex union to:
$$V(G) = V(H) \cup \bigcup_{h \in E^*(\overline{H})} F_h.$$

Because $|V(H)| \le m$, the number of non-edges $h$ in $E^*(\overline{H})$ is  bounded. To prove $\mathcal{F}$ is finite, it suffices to show that the size of every set $F_h$ is bounded by a universal constant. Fix a non-edge $h=xy$ in $E^*(\overline{H})$. The subgraph $(G+h)[F_h]$ must either belong to the finite family $\mathcal{H}$ or be an induced cycle. 

If $(G+h)[F_h] \in \mathcal{H}$ or is an induced cycle of length at most $K-1$, the size of $F_h$ is bounded by a constant dependent only on $m$ and $K$. So we may assume that $(G+h)[F_h]$ is an induced cycle of length at least $K$. Note that this cycle must utilize the edge $h$, otherwise it would be an induced cycle in $G$ of length $\ge K$, a contradiction. Therefore, $G[F_h]$ is an induced path in $G$ connecting $x$ and $y$. 

Since $x$ and $y$ belong to the same connected component of $H$, their distance in $H$ is at most $m-1$. Consequently, their distance $d$ in $G$ is bounded by a fixed constant $D \le m-1$. Furthermore, because $G$ lacks induced cycles of length $\ge K$, by Sublemma \ref{diameter}, the length of the induced path $G[F_h]$ between $x$ and $y$ is  bounded by $D(K-2)$. 

Thus, the size of every $F_h$ is bounded by a universal constant $L$ dependent only on the fixed constants $m$ and $K$. Because $V(G)$ is the union of $V(H)$ and a bounded number of sets $F_h$, the total number of vertices in $G$ is  bounded by a constant, proving $\mathcal{F}$ is finite.
\end{proof}

A graph is \textbf{weakly chordal} if every induced cycle in $G$ and $\overline{G}$ has length at most $4$. It is well known that weakly chordal graphs are perfect \cite{RH}.

\begin{theorem}
\label{edge_add_perfect}
Let $\mathcal{F}$ be the set of all forbidden induced subgraphs for the class of edge-add perfect graphs that are not cycles or their complements. Then $\mathcal{F}$ is infinite. 
\end{theorem}

\begin{proof}
The strong perfect graph theorem \cite{CSRT} asserts that the forbidden induced subgraphs for the class of perfect graphs are the odd cycles of length at least $5$ and their complements. But all these graphs are edge-add-perfect. (Adding an edge between two neighbors of a vertex in an odd cycle of length at least $5$ gives a perfect graph. Adding an edge between two non-adjacent vertices in the complement of an odd cycle of length at least $5$ is the complement of a path graph, which is perfect.) Notice that the disjoint union of two graphs from the set of  odd cycles of length at least $5$ and the complements of an odd cycle of length at least $5$ is a forbidden induced subgraph for the class of edge-add perfect graphs. This proves the theorem. 
\end{proof}

The situation for weakly chordal graphs is not clear. We offer the following conjecture: 

\begin{conjecture}
\label{edge_add_weaklychordal}
Let $\mathcal{F}$ be the set of all forbidden induced subgraphs for the class of edge-add weakly chordal graphs that are not cycles or their complements. Then $\mathcal{F}$ is finite. 
\end{conjecture}

\begin{figure}[htbp]
\centering
\begin{minipage}{.17\linewidth}
  \includegraphics[scale=0.25]{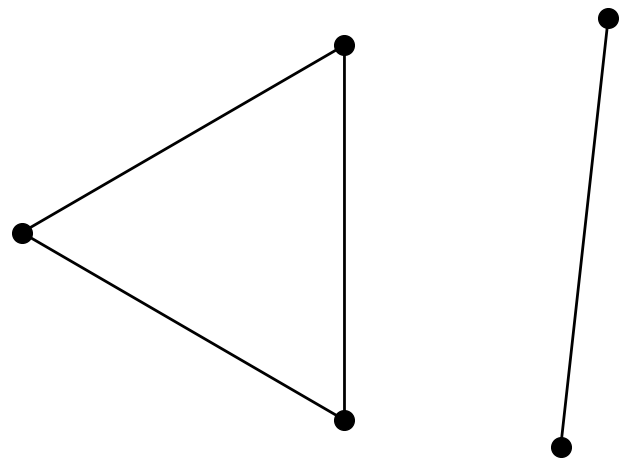}
\end{minipage} \hspace{.30\linewidth}
\begin{minipage}{.17\linewidth}
  \includegraphics[scale=0.25]{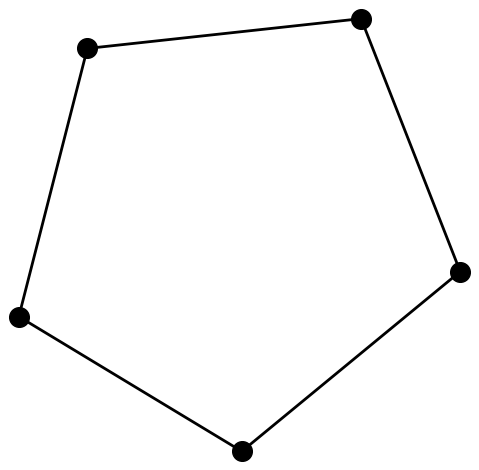}
\end{minipage} \hspace{.01\linewidth}
\caption{The $5$-vertex forbidden induced subgraphs for edge-add split graphs.}
\label{split_5}
\end{figure}

\begin{figure}[htbp]
\centering
\begin{minipage}{.17\linewidth}
  \includegraphics[scale=0.2]{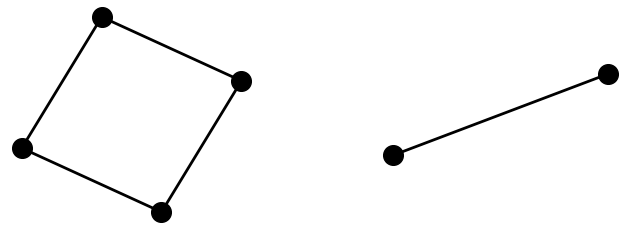}
\end{minipage}
\hspace{.09\linewidth}
\begin{minipage}{.17\linewidth}
\includegraphics[scale=0.2]{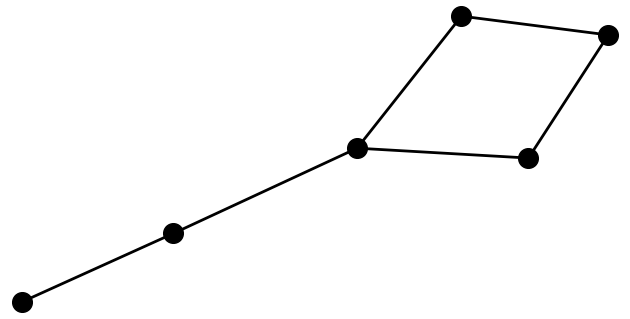}
\end{minipage}
\hspace{.09\linewidth}
\begin{minipage}{.17\linewidth}
\includegraphics[scale=0.2]{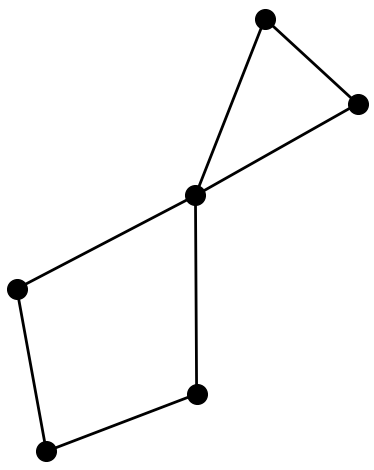}
\end{minipage}
\hspace{.05\linewidth}
\begin{minipage}{.17\linewidth}
  \includegraphics[scale=0.2]{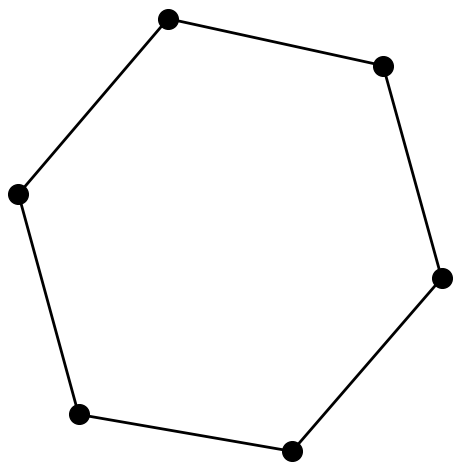}
\end{minipage}

\begin{minipage}{.17\linewidth}
  \includegraphics[scale=0.2]{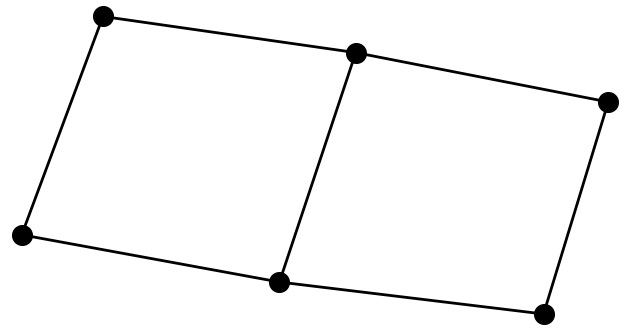}
\end{minipage}
\hspace{.1\linewidth}
\begin{minipage}{.17\linewidth}
\includegraphics[scale=0.2]{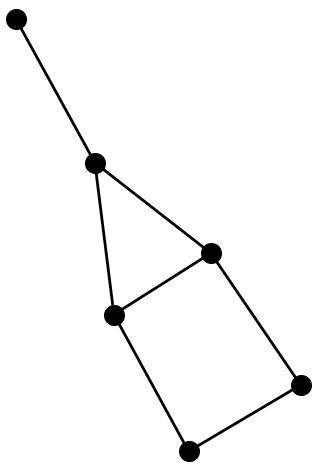}
\end{minipage}
\hspace{.05\linewidth}
\begin{minipage}{.17\linewidth}
\includegraphics[scale=0.2]{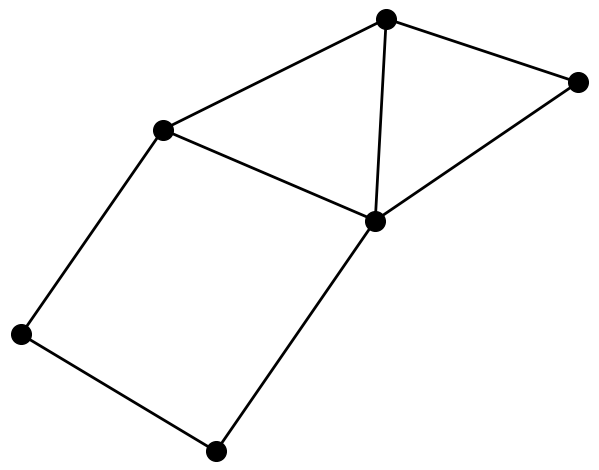}
\end{minipage}
\hspace{.05\linewidth}
\begin{minipage}{.17\linewidth}
  \includegraphics[scale=0.2]{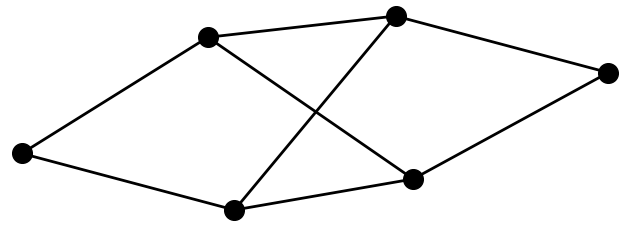}
\end{minipage}
\hspace{.05\linewidth}

\begin{minipage}{.17\linewidth}
  \includegraphics[scale=0.2]{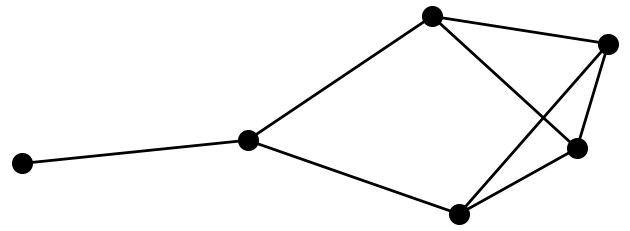}
\end{minipage}
\hspace{.1\linewidth}
\begin{minipage}{.17\linewidth}
\includegraphics[scale=0.2]{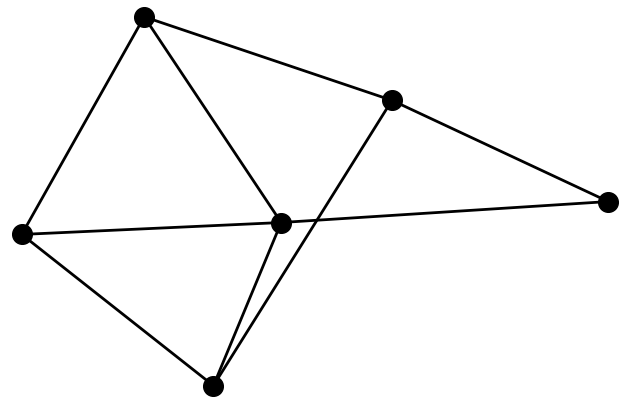}
\end{minipage}
\hspace{.09\linewidth}
\begin{minipage}{.17\linewidth}
\includegraphics[scale=0.2]{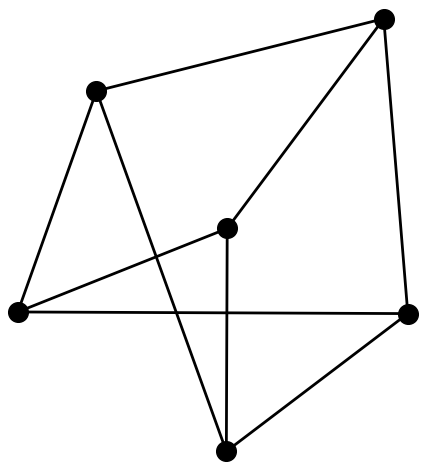}
\end{minipage}
\hspace{.05\linewidth}
\begin{minipage}{.17\linewidth}
  \includegraphics[scale=0.2]{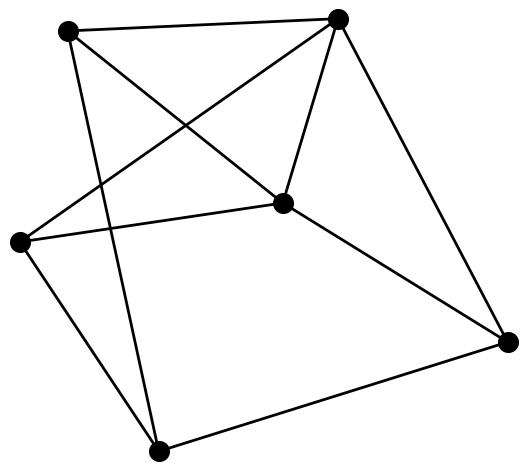}
\end{minipage}
\hspace{.05\linewidth}

\begin{minipage}{.17\linewidth}
  \includegraphics[scale=0.2]{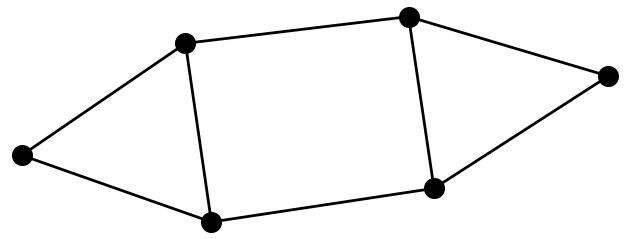}
\end{minipage}
\hspace{.12\linewidth}
\begin{minipage}{.17\linewidth}
\includegraphics[scale=0.2]{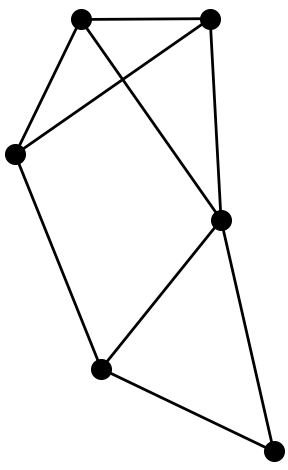}
\end{minipage}
\hspace{.05\linewidth}
\begin{minipage}{.17\linewidth}
\includegraphics[scale=0.2]{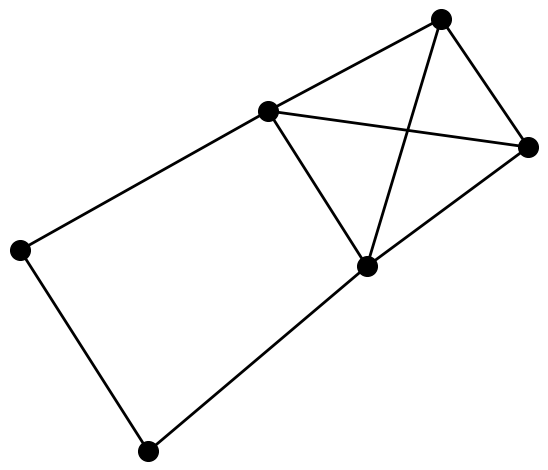}
\end{minipage}
\hspace{.05\linewidth}
\begin{minipage}{.17\linewidth}
  \includegraphics[scale=0.2]{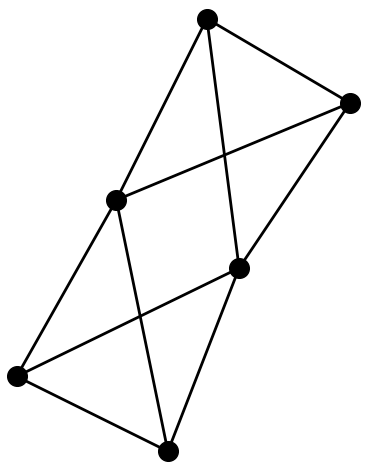}
\end{minipage}
\hspace{.05\linewidth}

\begin{minipage}{.17\linewidth}
  \includegraphics[scale=0.2]{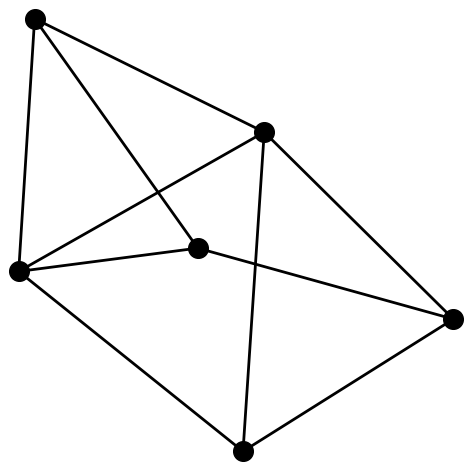}
\end{minipage}
\hspace{.12\linewidth}
\begin{minipage}{.17\linewidth}
\includegraphics[scale=0.2]{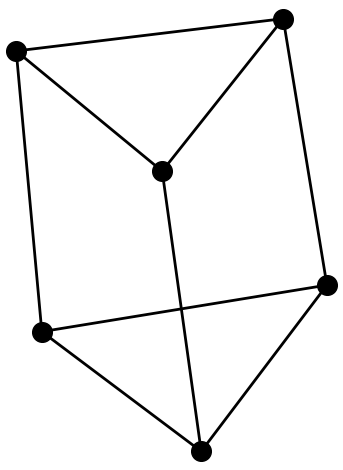}
\end{minipage}
\hspace{.05\linewidth}
\begin{minipage}{.17\linewidth}
\includegraphics[scale=0.2]{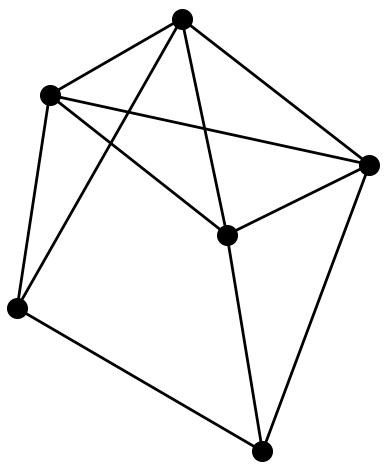}
\end{minipage}
\hspace{.05\linewidth}
\begin{minipage}{.17\linewidth}
  \includegraphics[scale=0.2]{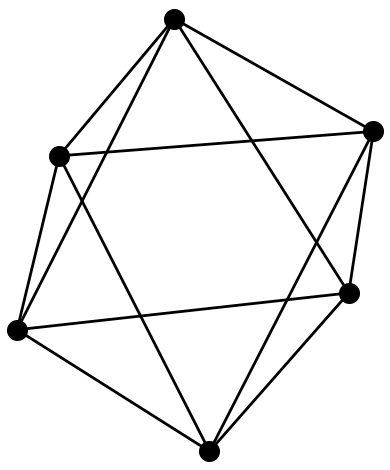}
\end{minipage}
\hspace{.05\linewidth}

\begin{minipage}{.17\linewidth}
  \includegraphics[scale=0.2]{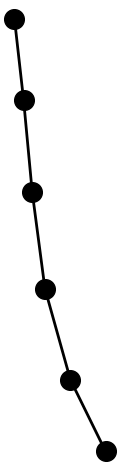}
\end{minipage}
\hspace{.05\linewidth}
\begin{minipage}{.17\linewidth}
\includegraphics[scale=0.2]{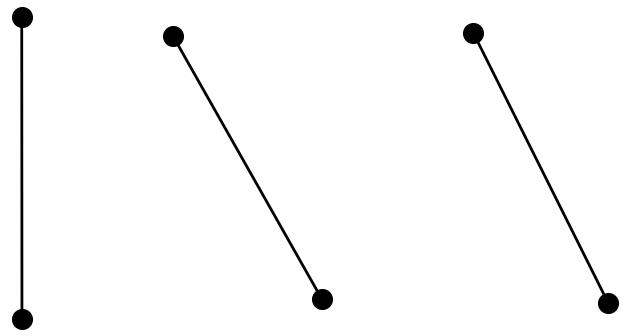}
\end{minipage}
\hspace{.15\linewidth}
\begin{minipage}{.17\linewidth}
\includegraphics[scale=0.2]{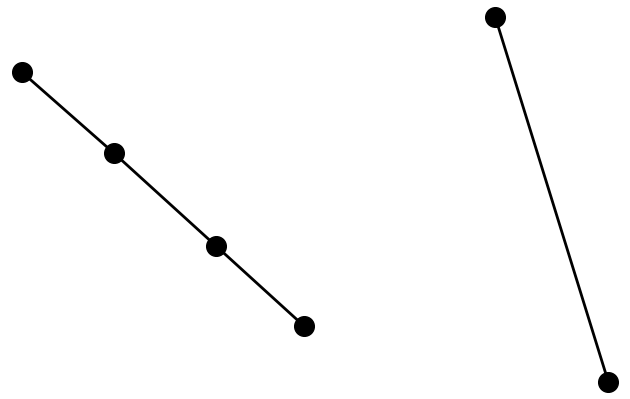}
\end{minipage}
\hspace{.05\linewidth}

\caption{The $6$-vertex forbidden induced subgraphs for edge-add split graphs.}

\label{split_6}

\end{figure}

\begin{figure}[htbp]
\centering

\begin{minipage}{.17\linewidth}
\includegraphics[scale=0.25]{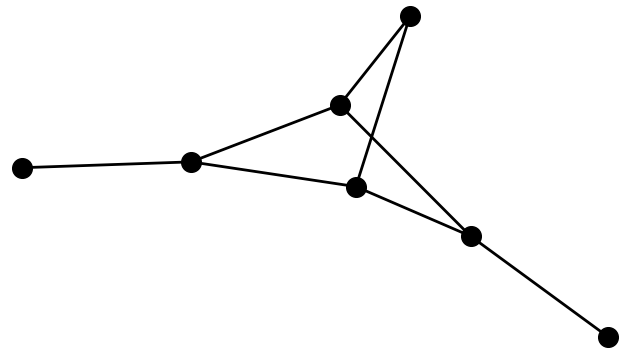}
\end{minipage}\hspace{.15\linewidth}
\begin{minipage}{.17\linewidth}
  \includegraphics[scale=0.25]{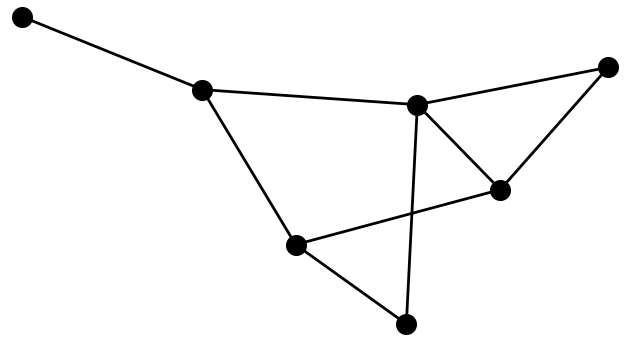}
\end{minipage} \hspace{.20\linewidth}
\begin{minipage}{.17\linewidth}
  \includegraphics[scale=0.25]{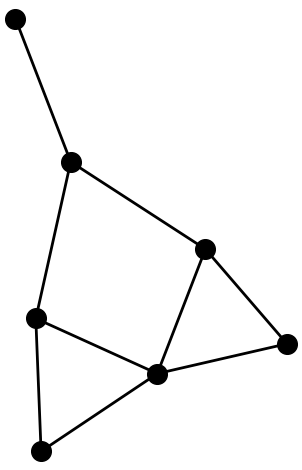}
\end{minipage}

\begin{minipage}{.17\linewidth}
\includegraphics[scale=0.25]{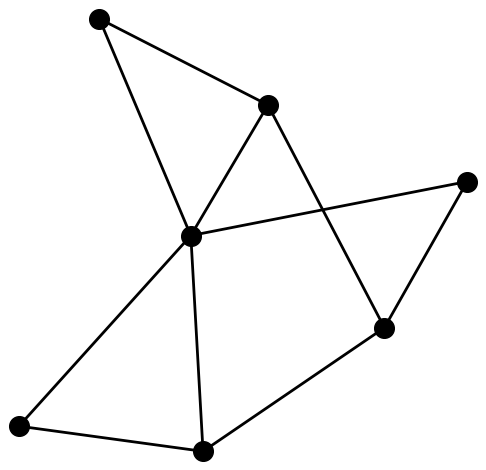}
\end{minipage}\hspace{.21\linewidth}
\begin{minipage}{.17\linewidth}
  \includegraphics[scale=0.25]{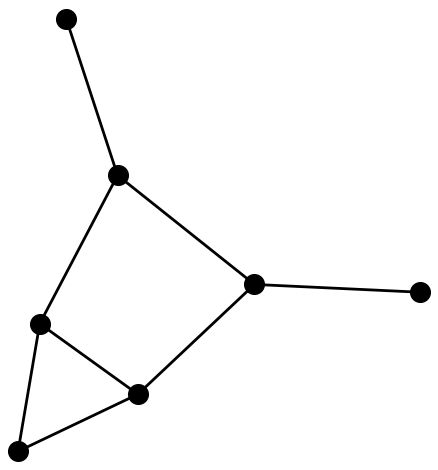}
\end{minipage} \hspace{.15\linewidth}
\begin{minipage}{.17\linewidth}
  \includegraphics[scale=0.25]{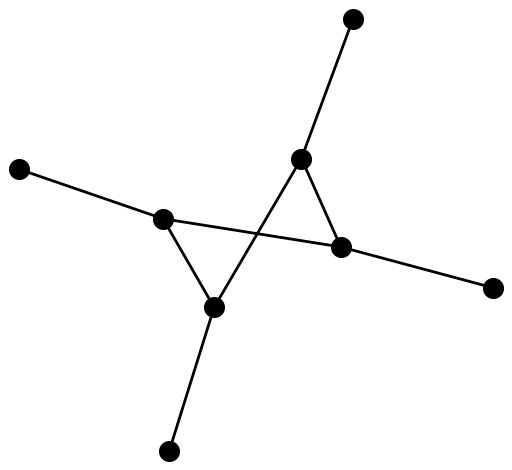}
\end{minipage}

\caption{The $7$-vertex and $8$-vertex forbidden induced subgraphs for edge-add split graphs.}

\label{split_7_8}

\end{figure}

\begin{figure}[htbp]
\centering
\begin{minipage}{.17\linewidth}
  \includegraphics[scale=0.2]{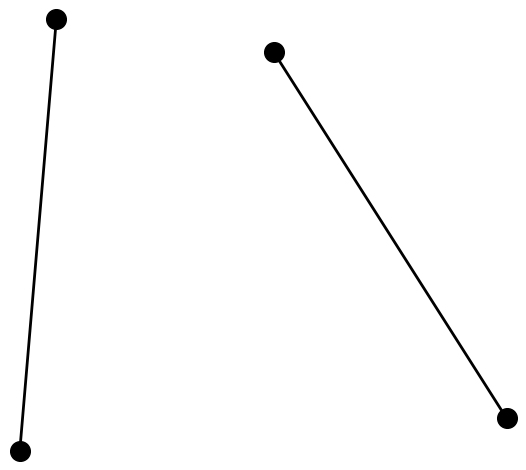}
\end{minipage}
\hspace{.08\linewidth}
\begin{minipage}{.17\linewidth}
\includegraphics[scale=0.2]{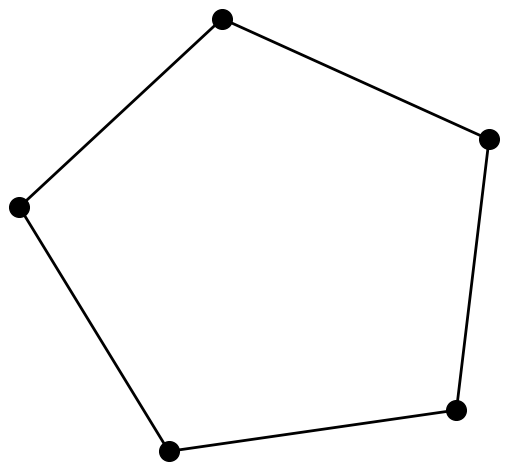}
\end{minipage}
\hspace{.05\linewidth}
\begin{minipage}{.17\linewidth}
\includegraphics[scale=0.2]{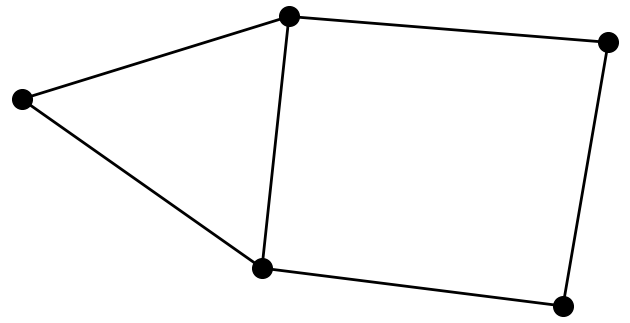}
\end{minipage}
\hspace{.1\linewidth}
\begin{minipage}{.17\linewidth}
  \includegraphics[scale=0.2]{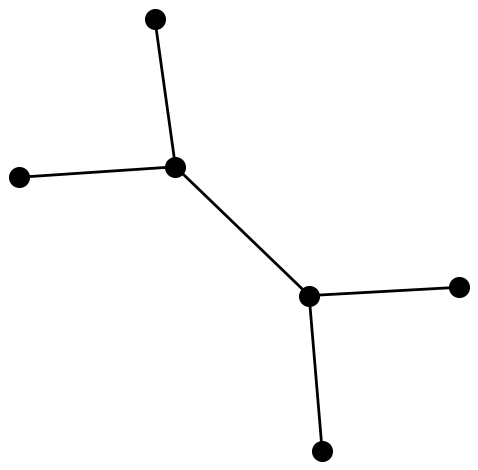}
\end{minipage}
\hspace{.05\linewidth}

\begin{minipage}{.17\linewidth}
  \includegraphics[scale=0.2]{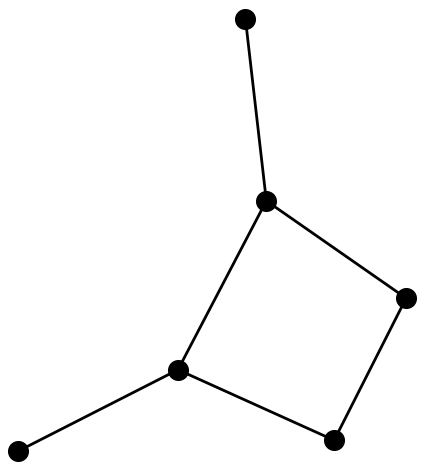}
\end{minipage} \hspace{.08\linewidth}
\begin{minipage}{.17\linewidth}
  \includegraphics[scale=0.25]{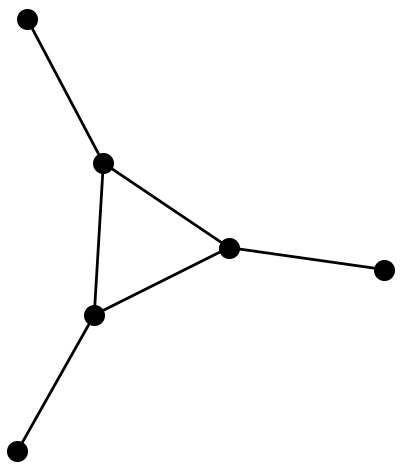}
\end{minipage}
\hspace{.05\linewidth}
\begin{minipage}{.17\linewidth}
\includegraphics[scale=0.25]{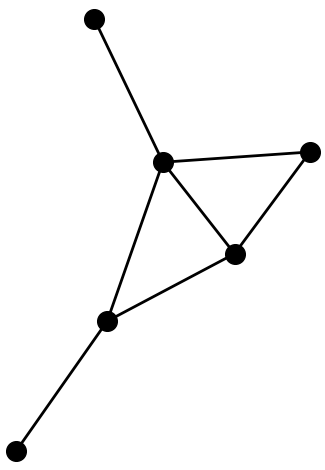}
\end{minipage}
\hspace{.11\linewidth}
\begin{minipage}{.17\linewidth}
\includegraphics[scale=0.25]{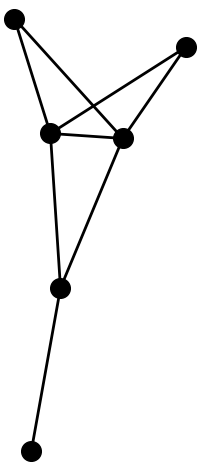}
\end{minipage}

\begin{minipage}{.17\linewidth}
  \includegraphics[scale=0.2]{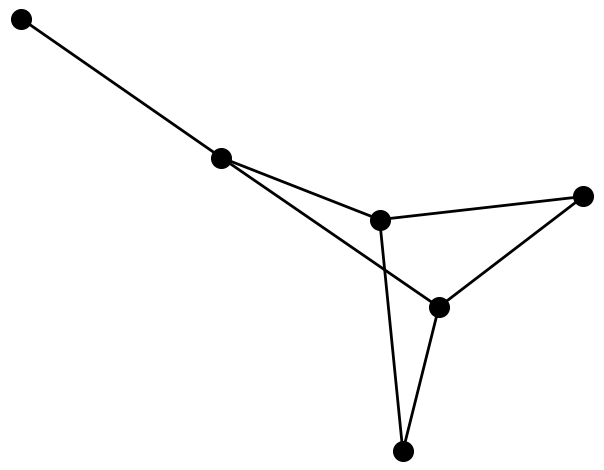}
\end{minipage}
\hspace{.1\linewidth}
\begin{minipage}{.17\linewidth}
  \includegraphics[scale=0.25]{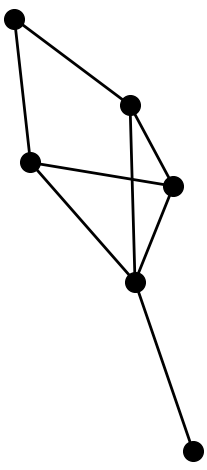}
\end{minipage} \hspace{.01\linewidth}
\begin{minipage}{.17\linewidth}
  \includegraphics[scale=0.2]{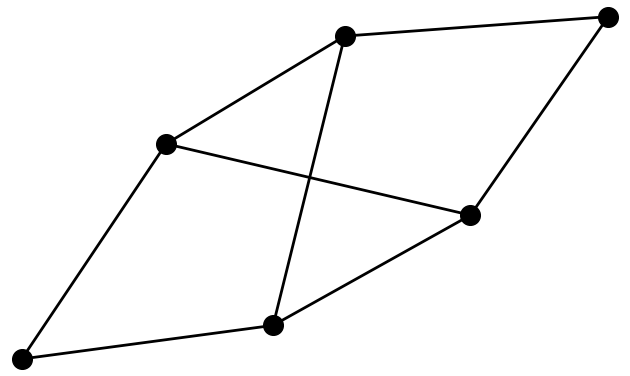}
\end{minipage} \hspace{.08\linewidth}
\begin{minipage}{.17\linewidth}
\includegraphics[scale=0.2]{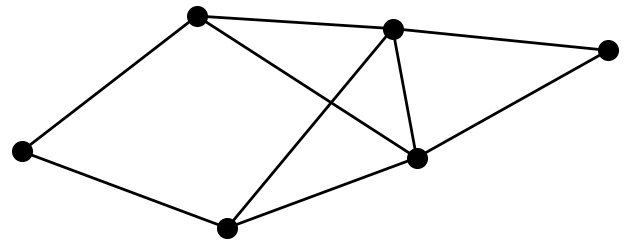}
\end{minipage}
\hspace{.05\linewidth}

\begin{minipage}{.17\linewidth}
\includegraphics[scale=0.2]{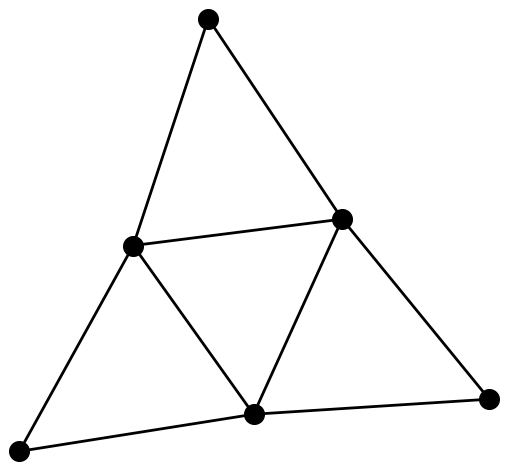}
\end{minipage}\hspace{.08\linewidth}
\begin{minipage}{.17\linewidth}
  \includegraphics[scale=0.22]{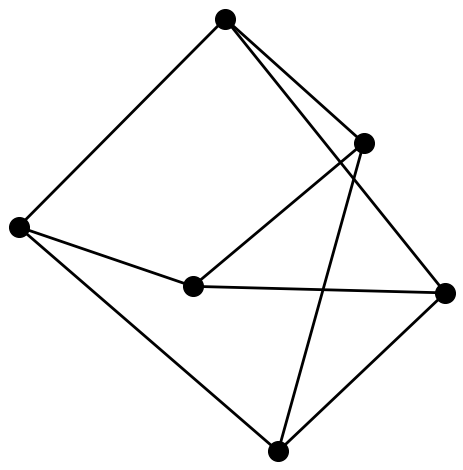}
\end{minipage}
\hspace{.07\linewidth}
\begin{minipage}{.17\linewidth}
  \includegraphics[scale=0.25]{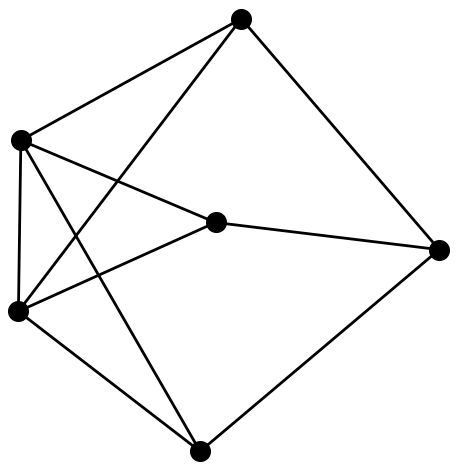}
\end{minipage} \hspace{.11\linewidth}
\begin{minipage}{.17\linewidth}
  \includegraphics[scale=0.27]{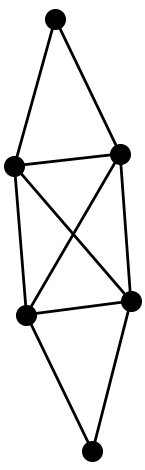}
\end{minipage}
\hspace{.05\linewidth}

\begin{minipage}{.17\linewidth}
\includegraphics[scale=0.2]{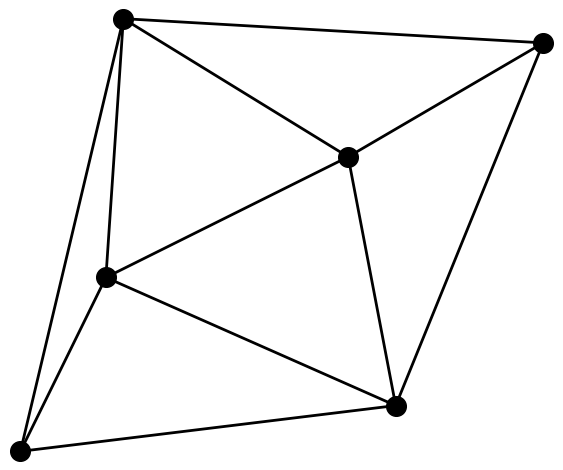}
\end{minipage}\hspace{.07\linewidth}
\begin{minipage}{.17\linewidth}
  \includegraphics[scale=0.2]{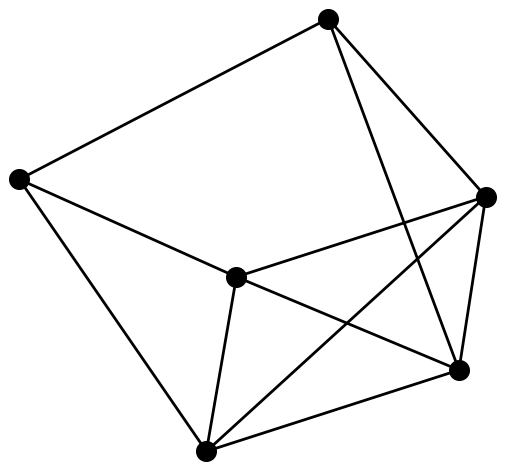}
\end{minipage}
\hspace{.07\linewidth}
\begin{minipage}{.17\linewidth}
  \includegraphics[scale=0.2]{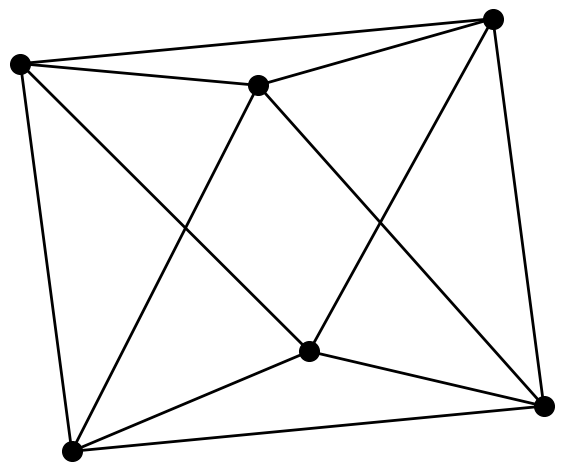}
\end{minipage} \hspace{.10\linewidth}
\begin{minipage}{.17\linewidth}
  \includegraphics[scale=0.27]{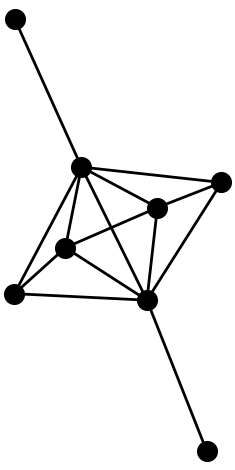}
\end{minipage}
\hspace{.05\linewidth}

\begin{minipage}{.17\linewidth}
\includegraphics[scale=0.30]{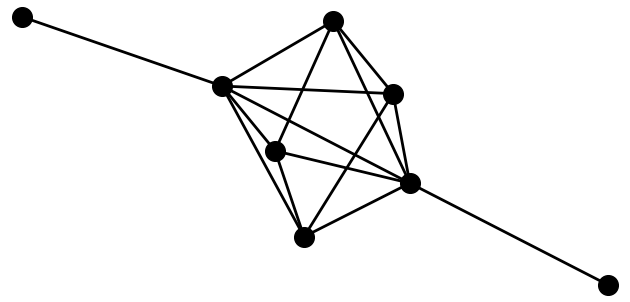}
\end{minipage}
\hspace{.01\linewidth}

\caption{The forbidden induced subgraphs for edge-add threshold graphs.}

\label{threshold}

\end{figure}

\begin{figure}[htbp]
\centering
\begin{minipage}{.17\linewidth}
  \includegraphics[scale=0.2]{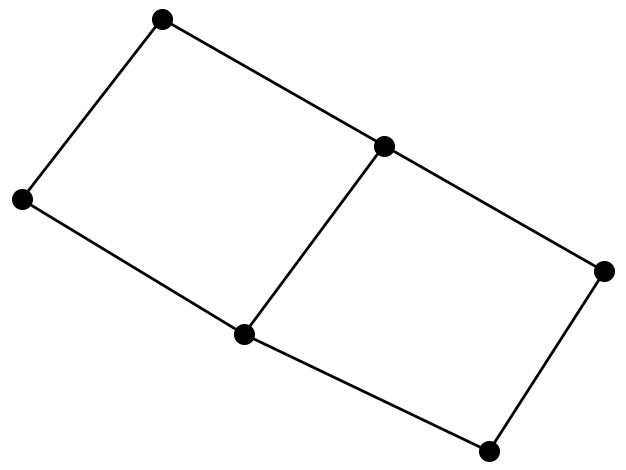}
\end{minipage}
\hspace{.15\linewidth}
\begin{minipage}{.17\linewidth}
\includegraphics[scale=0.2]{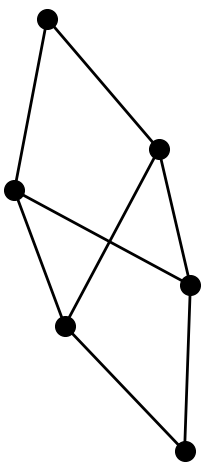}
\end{minipage}
\hspace{.05\linewidth}
\begin{minipage}{.17\linewidth}
\includegraphics[scale=0.2]{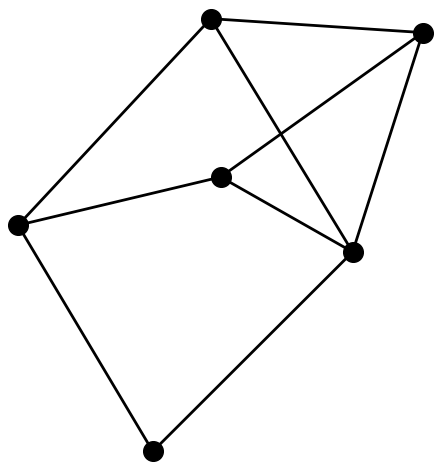}
\end{minipage}
\hspace{.05\linewidth}
\begin{minipage}{.17\linewidth}
  \includegraphics[scale=0.18]{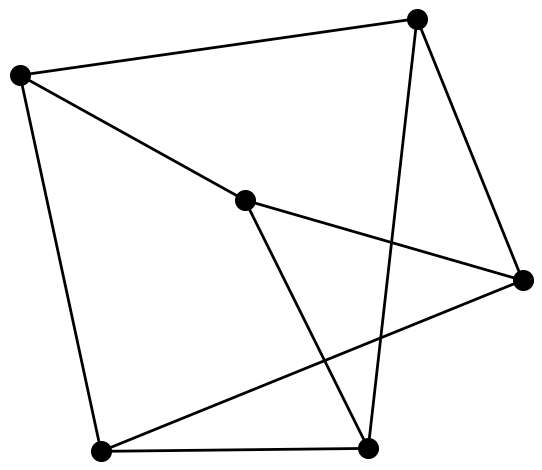}
\end{minipage}

\begin{minipage}{.17\linewidth}
  \includegraphics[scale=0.18]{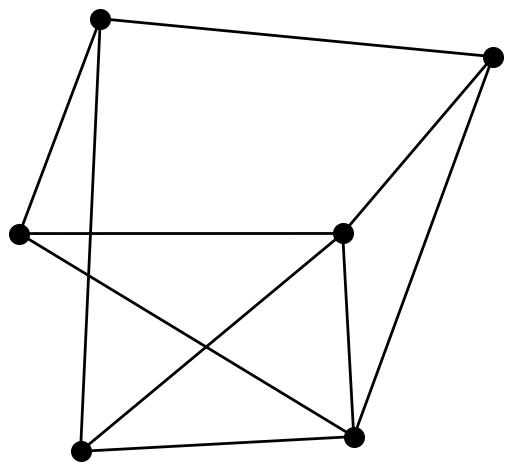}
\end{minipage}
\hspace{.08\linewidth}
\begin{minipage}{.17\linewidth}
\includegraphics[scale=0.18]{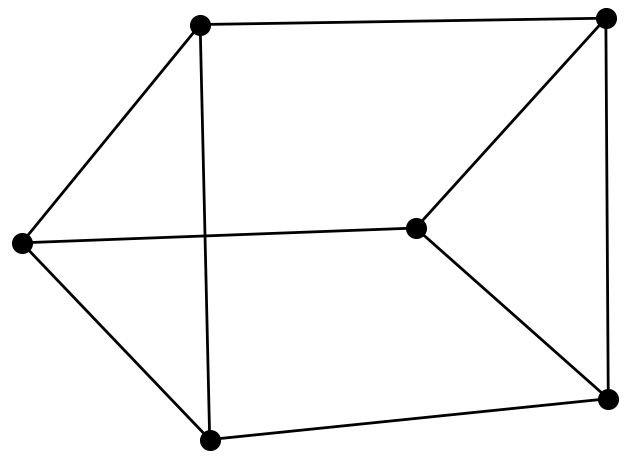}
\end{minipage}
\hspace{.05\linewidth}
\begin{minipage}{.17\linewidth}
\includegraphics[scale=0.18]{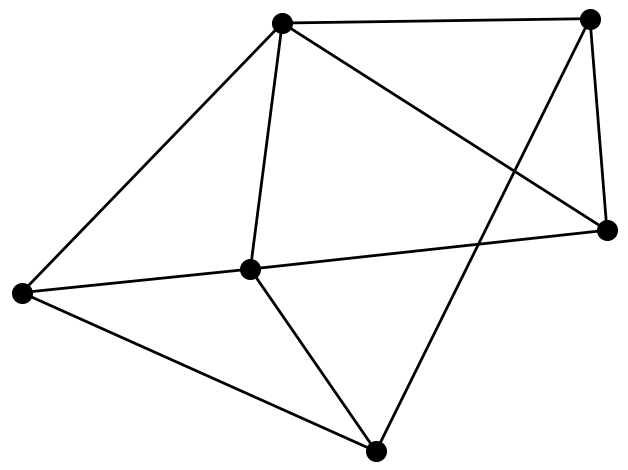}
\end{minipage}
\hspace{.1\linewidth}
\begin{minipage}{.17\linewidth}
  \includegraphics[scale=0.18]{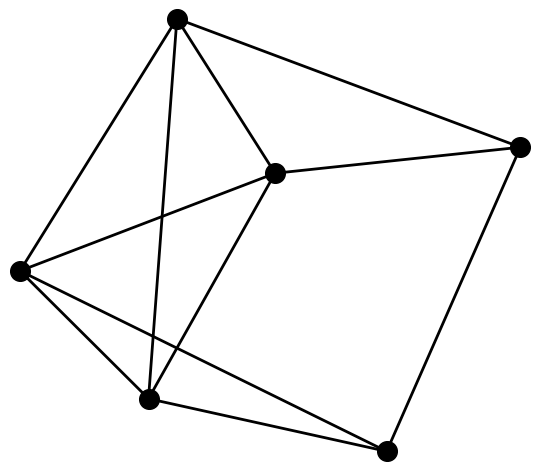}
\end{minipage}
\hspace{.05\linewidth}

\begin{minipage}{.17\linewidth}
  \includegraphics[scale=0.2]{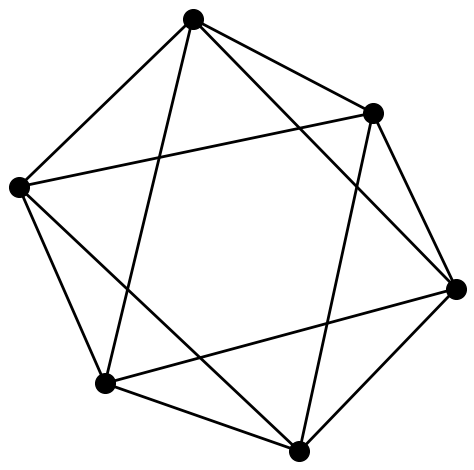}
\end{minipage}
\hspace{.12\linewidth}
\begin{minipage}{.17\linewidth}
\includegraphics[scale=0.2]{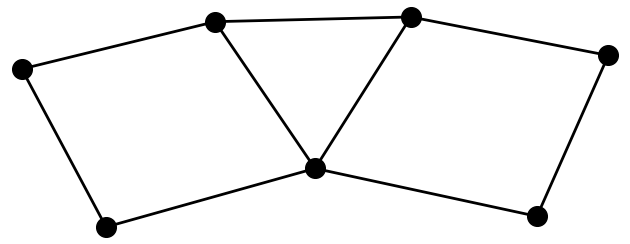}
\end{minipage}
\hspace{.08\linewidth}
\begin{minipage}{.17\linewidth}
\includegraphics[scale=0.2]{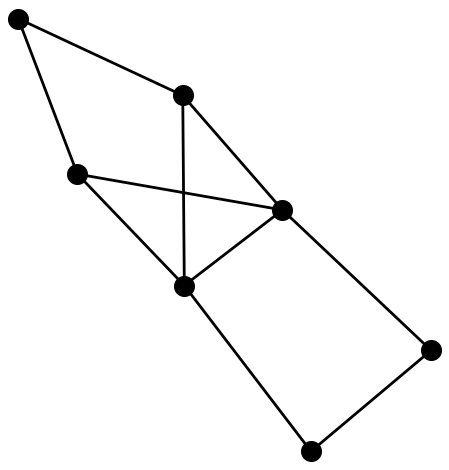}
\end{minipage}
\hspace{.05\linewidth}
\begin{minipage}{.17\linewidth}
  \includegraphics[scale=0.2]{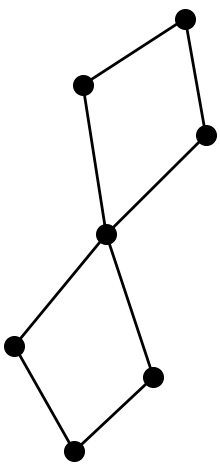}
\end{minipage}
\hspace{.05\linewidth}

\begin{minipage}{.17\linewidth}
  \includegraphics[scale=0.25]{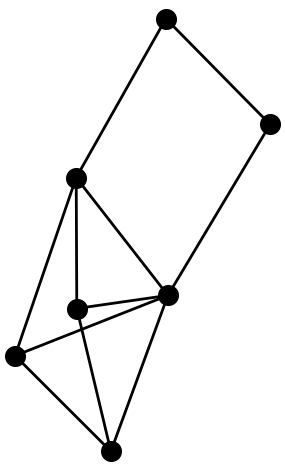}
\end{minipage}
\hspace{.08\linewidth}
\begin{minipage}{.17\linewidth}
\includegraphics[scale=0.23]{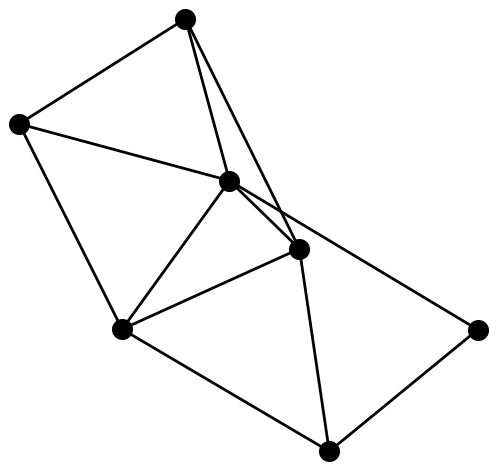}
\end{minipage}
\hspace{.12\linewidth}
\begin{minipage}{.17\linewidth}
\includegraphics[scale=0.23]{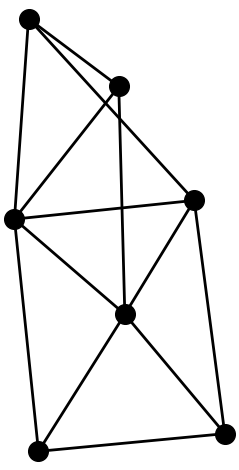}
\end{minipage}
\hspace{.05\linewidth}

\caption{$6$-vertex and $7$-vertex forbidden induced subgraphs for edge-add chordal graphs.}

\label{chordal_6_7}

\end{figure}

\begin{figure}[htbp]
\centering
\begin{minipage}{.17\linewidth}
  \includegraphics[scale=0.18]{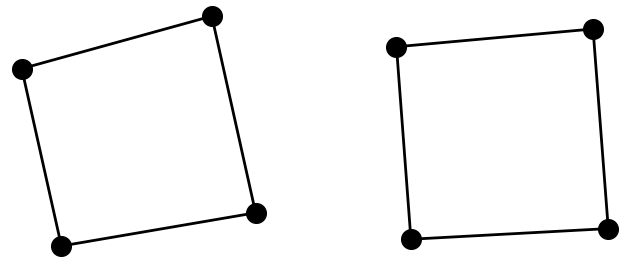}
\end{minipage}
\hspace{.08\linewidth}
\begin{minipage}{.17\linewidth}
\includegraphics[scale=0.2]{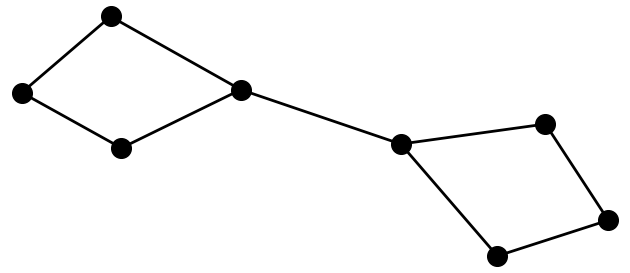}
\end{minipage}
\hspace{.08\linewidth}
\begin{minipage}{.17\linewidth}
\includegraphics[scale=0.2]{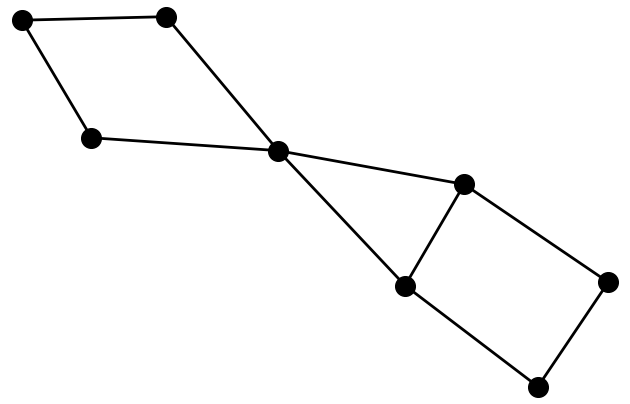}
\end{minipage}
\hspace{.08\linewidth}
\begin{minipage}{.17\linewidth}
  \includegraphics[scale=0.2]{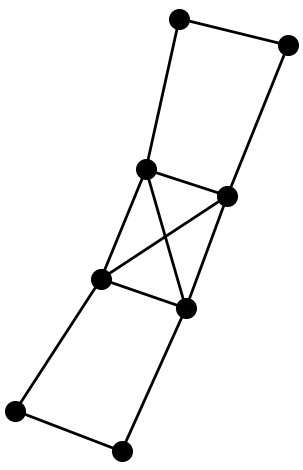}
\end{minipage}

\begin{minipage}{.17\linewidth}
  \includegraphics[scale=0.18]{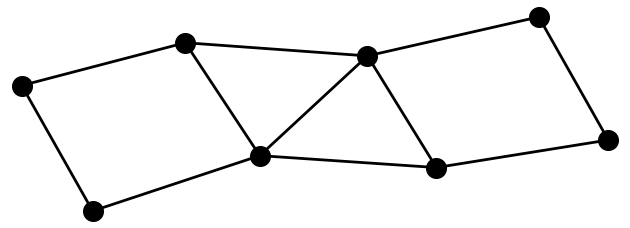}
\end{minipage}
\hspace{.08\linewidth}
\begin{minipage}{.17\linewidth}
\includegraphics[scale=0.18]{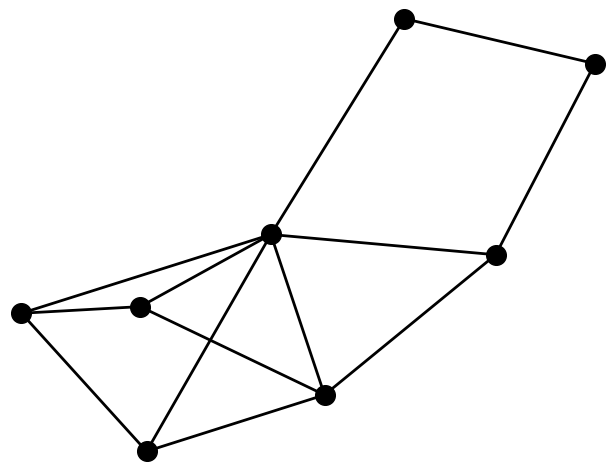}
\end{minipage}
\hspace{.08\linewidth}
\begin{minipage}{.17\linewidth}
\includegraphics[scale=0.18]{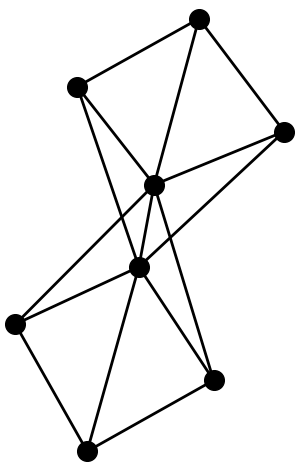}
\end{minipage}
\hspace{.02\linewidth}
\begin{minipage}{.17\linewidth}
  \includegraphics[scale=0.18]{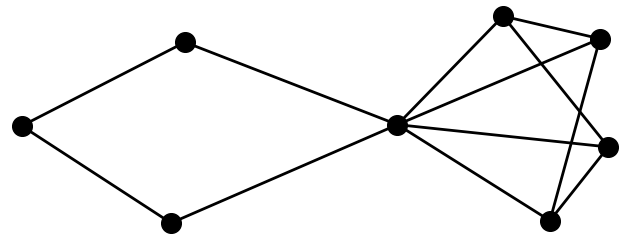}
\end{minipage}
\hspace{.05\linewidth}

\begin{minipage}{.17\linewidth}
  \includegraphics[scale=0.2]{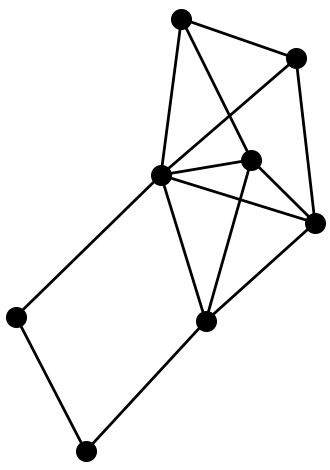}
\end{minipage}
\hspace{.02\linewidth}
\begin{minipage}{.17\linewidth}
\includegraphics[scale=0.18]{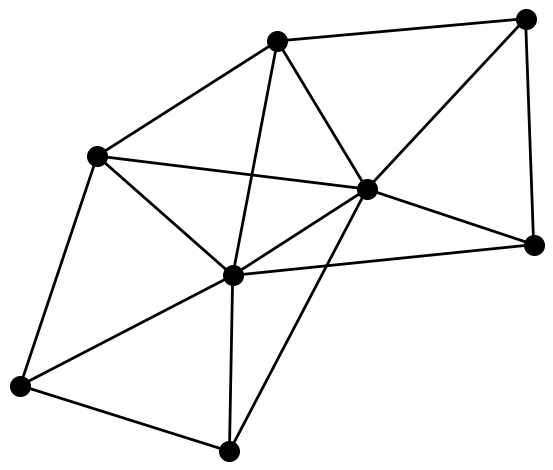}
\end{minipage}
\hspace{.08\linewidth}
\begin{minipage}{.17\linewidth}
\includegraphics[scale=0.2]{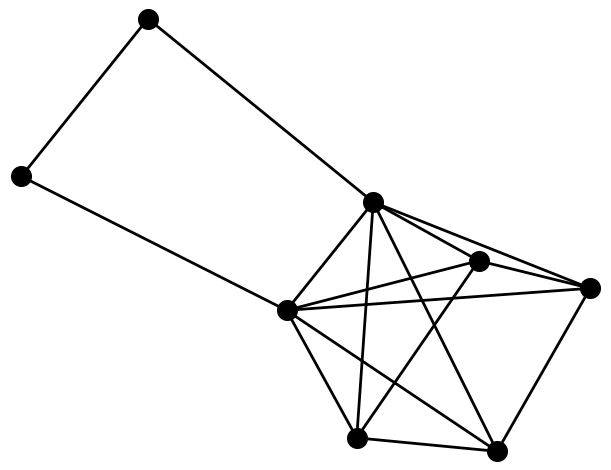}
\end{minipage}
\hspace{.08\linewidth}
\begin{minipage}{.17\linewidth}
  \includegraphics[scale=0.2]{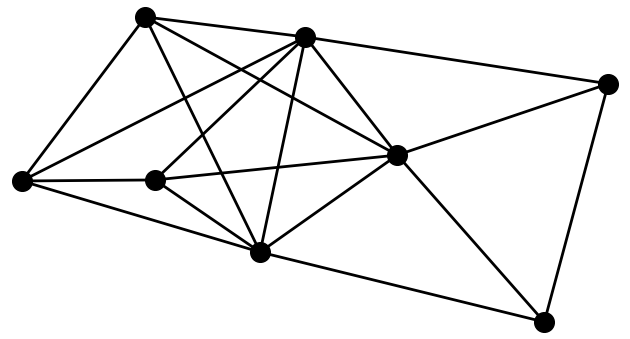}
\end{minipage}
\hspace{.05\linewidth}

\begin{minipage}{.17\linewidth}
  \includegraphics[scale=0.22]{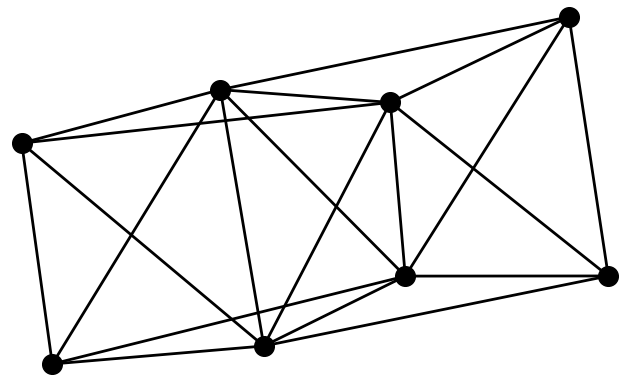}
\end{minipage}
\hspace{.08\linewidth}

\caption{$8$-vertex forbidden induced subgraphs for edge-add chordal graphs.}

\label{chordal_8}

\end{figure}

\section{Forbidden induced subgraphs for edge-add split graphs and edge-add threshold graphs}

A graph is a \textbf{cograph} if it does not contain $P_4$, the 4-vertex path graph.
A graph is \textbf{split} if its vertex set can be partitioned into a clique (a set of pairwise adjacent vertices) and an independent set (a set of pairwise non-adjacent vertices). A graph is \textbf{threshold} if it is a cograph and contains neither the $4$-cycle nor its complement. 

The authors in \cite{vsin} characterized edge-apex cographs by their forbidden induced subgraphs. Since the class of cographs is closed under complementation, an immediate consequence of Corollary \ref{complementation} is a characterization of edge-add cographs by their forbidden induced subgraphs.

We prove the following.

\begin{theorem}
\label{edge_add_threshold}
Let $G$ be a forbidden induced subgraph for the class of edge-add threshold graphs. Then $4 \leq |V(G)| \leq 8$, and $G$ is isomorphic to one of the graphs in Figure \ref{threshold}. 
\end{theorem}

\begin{proof}
Since $G$ is not a threshold graph, it contains a forbidden induced subgraph $H$ for the class of threshold graphs. It is  well-known that the forbidden induced subgraphs for threshold graphs are exactly $2K_2$, $C_4$, and $P_4$. We consider these three exhaustive cases for $H$.  Observe that $2K_2$ is not an edge-add threshold graph since no single edge can be added to it to obtain a threshold graph. Therefore, if $H = 2K_2$, then by minimality $G = 2K_2$, yielding $|V(G)| = 4$. 

Next, suppose that $H = C_4$. Because $H$ has $c=4$ vertices and $k=2$ non-edges, and the maximum number of vertices in a forbidden induced subgraph for threshold graphs is $m=4$, it follows directly by Proposition \ref{unique_FIS} that $|V(G)| \leq 4 + 2(4-2) = 8$.

Finally, suppose that $H$ is an induced path on four vertices $abcd$. The graph $H$ has exactly three non-edges: $ac, bd$, and $ad$. Note that in the graph $G+ad$, the vertex set $V(H) = \{a,b,c,d\}$ induces a $C_4$, which is a forbidden induced subgraph for threshold graphs. Therefore, we may choose $F_{ad} = V(H)$. Because $F_{ad}$ is entirely contained within $V(H)$, it contributes zero vertices outside of $H$. By Lemma \ref{unique_FIS_lemma}, $V(G) = V(H) \cup F_{ac} \cup F_{bd} \cup F_{ad}$. Because $F_{ad}$ contributes zero new vertices, and the sets $F_{ac}$ and $F_{bd}$ each contribute at most $m-2 = 2$ vertices outside of $V(H)$, we obtain $|V(G)| \leq 4 + 0 + 2 + 2 = 8$. 

Because the number of vertices in any such obstruction $G$ is  bounded by 8, applying Algorithm 1 yields the exact and complete list of forbidden induced subgraphs for edge-add threshold graphs, as presented in Figure \ref{threshold}.
\end{proof}

The following is immediate by complementation.

\begin{corollary}
Let $G$ be a forbidden induced subgraph for the class of edge-apex threshold graphs. Then $4 \leq |V(G)| \leq 8$, and $G$ is isomorphic to the complement of one of the graphs in Figure \ref{threshold}. 
\end{corollary}

Next we prove the following. 

\begin{theorem}
Let $G$ be a forbidden induced subgraph for the class of edge-add split graphs. Then $5 \leq |V(G)| \leq 8$, and $G$ is isomorphic to one of the graphs in Figures \ref{split_5}, \ref{split_6}, and \ref{split_7_8}.
\end{theorem}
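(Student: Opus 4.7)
By the Földes--Hammer theorem, a graph is split if and only if it induces none of $2K_2$, $C_4$, or $C_5$; hence $\mathcal{H} = \{2K_2, C_4, C_5\}$ with $m = 5$. Proposition~\ref{unique_FIS} already bounds $|V(G)|$ by a constant (at most $20$, coming from the $C_5$ case), but we need to sharpen this to $8$ and identify all minimal examples.

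For the lower bound $|V(G)| \geq 5$, inspect the graphs on at most four vertices: the only non-split examples are $2K_2$ and $C_4$, and each becomes split after a single edge addition ($P_4$ and $K_4-e$ respectively), so every graph on $\leq 4$ vertices lies in $\Ga$. The cycle $C_5$ is itself a minimal forbidden induced subgraph for $\Ga$: adding any chord creates an induced $C_4$ on four of its vertices, whereas $C_5 - v = P_4$ is split.

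For the upper bound I would branch on the largest element of $\mathcal{H}$ induced in $G$. Suppose first that $G$ contains $C_5$ on $V_0 = \{v_1,\ldots,v_5\}$, and take any $x \in V(G)\setminus V_0$. For any non-edge $e$ of $G - x$, either $e$ has at most one endpoint in $V_0$ (so the induced $C_5$ on $V_0$ survives in $G - x + e$), or $e$ is a chord of $V_0$ (so the four vertices of $V_0$ not incident to $e$ now induce a $C_4$ in $G - x + e$); in either case $G - x + e$ is non-split, contradicting $G - x \in \Ga$. Hence the only forbidden induced subgraph containing $C_5$ is $C_5$ itself. In the remaining subcases $C_5 \not\subseteq G \supseteq C_4$ and $C_5, C_4 \not\subseteq G \supseteq 2K_2$, apply Lemma~\ref{unique_FIS_lemma} with $H$ chosen as the induced $C_4$ or $2K_2$: each non-edge $h$ of $H$ supplies a set $F_h \subseteq V(G)$ of size at most $5$ with $(G+h)[F_h] \in \mathcal{H}$, and, by the argument in Proposition~\ref{unique_FIS}, $|F_h \cap V(H)| \geq 2$. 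A careful structural analysis of how the sets $F_h$ can overlap inside $V(G) \setminus V(H)$---exploiting that whenever $F_h$ uses the added edge $h$ it must contain both endpoints of $h$, and enumerating the possible shapes of $(G+h)[F_h]$ in $\{2K_2, C_4, C_5\}$---forces at most $8 - |V(H)|$ new vertices outside $V(H)$, yielding $|V(G)| \leq 8$.

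With $|V(G)| \leq 8$ in hand, the explicit list in Figures~\ref{split_5}, \ref{split_6}, and \ref{split_7_8} is obtained by a finite case check on graphs with $5 \leq |V(G)| \leq 8$ vertices: verify that $G + e$ is non-split for every non-edge $e$ and that $G - v \in \Ga$ for every vertex $v$. The principal obstacle is the scale of this enumeration---Figure~\ref{split_6} alone contains twenty-three six-vertex graphs---so the analysis must be organized systematically, by the largest induced $H \in \mathcal{H}$ and by the possible interaction patterns of the attached forbidden subgraphs $F_h$ outside $V(H)$, to guarantee that the list is complete.
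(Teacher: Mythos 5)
Your overall strategy matches the paper's: invoke F\"oldes--Hammer to get $\mathcal{H}=\{2K_2,C_4,C_5\}$, dispose of the $C_5$ case by minimality, bound $|V(G)|$ via Lemmas \ref{unique_FIS_lemma_disjoint_case} and \ref{unique_FIS_lemma} in the remaining cases, and finish with a computer enumeration. The lower bound and the $C_5$ case are handled correctly (modulo a slip: after adding a chord $e$ to $C_5$ the induced $C_4$ lives on the two endpoints of $e$ together with the two vertices of the long arc, not on ``the four vertices not incident to $e$''--- there are only three of those).

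The genuine gap is the upper bound $|V(G)|\leq 8$, which is the entire content of the theorem beyond Proposition \ref{unique_FIS}. The facts you actually establish---each $F_h$ has at most $5$ vertices and meets $V(H)$ in at least $2$ vertices---yield only $4+2\cdot 3=10$ when $H=C_4$ and $4+4\cdot 3=16$ when $H=2K_2$; the assertion that the overlap analysis ``forces at most $8-|V(H)|$ new vertices outside $V(H)$'' is exactly what must be proved, and you give no argument for it. The paper's proof of this step is a nontrivial case analysis: it first proves a sublemma that an induced $P_5$ forces $|V(G)|\leq 8$ (because a $P_5$ contains a $2K_2$ three of whose four non-edges already have their witnessing sets $F_h$ inside the five path vertices, leaving only one $F_h$ to control), and then, in the $C_4$ and $2K_2$ cases, repeatedly shows that either some $G[F_h]$ is itself a forbidden subgraph (reducing to the disjoint-intersection lemma), or the configuration forces an induced $P_5$ or $C_4$/$C_5$ handled earlier, or forced adjacencies pin down $V(G)$ to at most $8$ vertices. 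None of this is present in your proposal, so as written it proves only $|V(G)|\leq 20$; the reduction to $8$, and hence the completeness of the figures, is not established.
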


\begin{proof}
It is clear that $|V(G)| \geq 5$. Since $G$ is not a split graph, it contains a forbidden induced subgraph for split graphs. It is well-known that the forbidden induced subgraphs for split graphs are exactly $2K_2$, $C_4$, and $C_5$.  Suppose $G$ contains a $5$-cycle $C_5$. Observe that $C_5$ is not an edge-add split graph since no edge can be added to it to obtain a split graph. It follows that $G = C_5$ so we may assume that $G$ does not contain a $C_5$. 

\begin{sublemma}
\label{P_5_case}
If $G$ contains a $P_5$, the path on five vertices, then $|V(G)| \leq 8$.
\end{sublemma}

Suppose that $12345$ is a $P_5$ contained in $G$. Observe that the graph $H$ induced by $\{1,2,4,5\}$ is $2K_2$, a forbidden induced subgraph for split graphs. Note that $\{1,2,3,4,5\}$ induces a $C_5$ in $G+15$ where $15$ is a non-edge of $H$. Similarly, $\{2,3,4,5\}$ and $\{1,2,3,4\}$ induce $C_4$ in $G+25$ and $G+14$ respectively. Let $F_{24}$ denote the subset of $V(G)$ such that $(G+24)[F_{24}]$ is a forbidden induced subgraph for split graphs. By Lemma \ref{unique_FIS_lemma}, it follows that $V(G) = \{1,2,3,4,5\} \cup F_{24}$. Observe that if $F_{24} - \{1,2,3,4,5\}$ exceeds three, then $G[F_{24}]$ is a forbidden induced subgraph for split graphs and the result follows by Lemma \ref{unique_FIS_lemma_disjoint_case}. Thus Sublemma \ref{P_5_case} holds. 

Next suppose that $G$ contains a $4$-cycle $C_4$, say $H = abcda$. By Lemma \ref{unique_FIS_lemma}, corresponding to each non-edge $ac$ and $bd$ of $H$, we have a subset $F_{ac}$ and $F_{bd}$ such that $(G+ac)[F_{ac}]$ and $(G+bd)[F_{bd}]$ are forbidden induced subgraphs for split graphs. Moreover, $V(G) = \{a,b,c,d\} \cup F_{ac} \cup F_{bd}$. Observe that if both $F_{ac} - \{a,b,c,d\}$ and $F_{bd} - \{a,b,c,d\}$ are at most two, then $|V(G)| \leq 8$ so we may assume that $|F_{ac} - \{a,b,c,d\}| \geq 3$. 

Note that if $|F_{ac}| = 4$, because $|F_{ac} - \{a,b,c,d\}| \geq 3$, the set $F_{ac}$ can share at most one vertex with $\{a,b,c,d\}$. 
Thus, $F_{ac}$ cannot contain both $a$ and $c$, that is the edge $ac$ is not present in $(G+ac)[F_{ac}]$. Consequently, $(G+ac)[F_{ac}] = G[F_{ac}]$. Since $C_4$ and $F_{ac}$ have at most one common vertex, it follows by Lemma \ref{unique_FIS_lemma_disjoint_case} that $|V(G)| \leq 8$. Therefore $|F_{ac}| = 5$. Since $G$ contains no $C_5$, $G[F_{ac}]$ is an induced path on five vertices and the result follows by Sublemma \ref{P_5_case}. 

We may now assume that $G$ contains no $C_4$ or $C_5$ so $G$ contains a forbidden induced subgraph $H = 2K_2$ for the class of split graphs. Suppose this $H$ has edges $12$ and $34$ so the non-edges are $\{13,23,14,24\}$. Note that corresponding to each non-edge $h$ of $H$, there is a subset $F_h$ of $V(G)$ such that $(G+h)[F_h]$ is a forbidden induced subgraph for the class of split graphs. Observe that, if for any $h$ in $\{13,23,14,24\}$, the graph $(G+h)[F_h]$ is a $C_5$, then $G[F_h]$ must be a $P_5$ since $G$ contains no $C_5$. The result then follows by Sublemma \ref{P_5_case}. 

Therefore for each $h$ in $\{13,23,14,24\}$, the graph $(G+h)[F_h]$ is isomorphic to $C_4$, or $2K_2$. If for every such $F_h$, we have $|F_h - \{1,2,3,4\}| \leq 1$, then by Lemma \ref{unique_FIS_lemma}, the result follows. We choose $F_h$ such that $|F_h - \{1,2,3,4\}|$ is maximal, say $F_h = F_{13}$. We may assume that $F_{13} - \{1,2,3,4\}$ is at least two. 

First suppose that $(G+13)[F_{13}]$ is a $C_4$. If $F_{13} \cap \{1,2,3,4\} \neq \{1,3\}$, then the result follows by Lemma \ref{unique_FIS_lemma_disjoint_case} so $F_{13} = \{1,3,a,b\}$ and $1ab3$ is an induced path of length four in $G$. Observe that if $a4$ is not an edge of $G$, then $\{a,1,3,4\}$ induce a forbidden induced subgraph $F=2K_2$ in $G$. Since $F$ is an induced subgraph of $G+h$ for $h$ in $\{14,23,24\}$, it follows by Lemma \ref{unique_FIS_lemma} that $|V(G)| = 6$. Similarly, $b2$ is an edge of $G$. Moreover, if $a2$ is not an edge in $G$, then $\{a,1,2,b\}$ induce a $C_4$ in $G$, a contradiction to our assumption. Therefore $a2$ is an edge of $G$. Similarly, $b4$ is an edge of $G$. Observe that $\{a,2,3,4\}$ and $\{1,2,b,4\}$ induce  $C_4$ in $G+23$ and $G+14$ respectively. By Lemma \ref{unique_FIS_lemma}, it follows that $V(G) = \{1,2,3,4,a,b\} \cup F_{24}$. Since $|F_{24}| = 4$ and $|F_{24} - \{1,2,3,4\}| \leq 2$, the result follows.

Finally suppose that $(G+13)[F_{13}]$ is a $2K_2$. By a similar argument as above, $F_{13} \cap \{1,2,3,4\} = \{1,3\}$, say $F_{13} = \{a,b,1,3\}$. Observe that $G[F_{13}]$ has only one edge, namely $ab$. Note that if $2$ has no neighbors in $\{a,b\}$ in $G$, then $\{1,2,a,b\}$ induce a forbidden induced subgraph $F = 2K_2$ in $G$. Since $F$ is a forbidden induced subgraph in $G+h$ for each $h$ in $\{13, 14, 23, 24\}$, by Lemma \ref{unique_FIS_lemma}, we obtain $|V(G)| \leq 6$. Therefore $2$ has a neighbor in $\{a,b\}$ in $G$. Similarly, $4$ has a neighbor in $\{a,b\}$ in $G$. If $2$ and $4$ have a common neighbor in $\{a,b\}$, say $a$, then $12a43$ is an induced $P_5$ in $G$ so the result follows by \ref{P_5_case}. Therefore $a4$ and $b2$ are non-edges of $G$, while $a2$ and $b4$ are edges of $G$. We again obtain an induced $P_5$ in $G$ and the result follows by \ref{P_5_case}.  

Since we have bounded the order of any such graph to at most 8 vertices, we apply Algorithm 1 using SageMath \cite{sage} to obtain the complete list of forbidden induced subgraphs for edge-add split graphs, presented in Figures \ref{split_5}, \ref{split_6}, and \ref{split_7_8}.
\end{proof}

 The following is immediate by taking complements.

\begin{corollary}
Let $G$ be a forbidden induced subgraph for the class of edge-apex split graphs. Then $5 \leq |V(G)| \leq 8$, and $G$ is isomorphic to the complement of one of the graphs in Figures \ref{split_5}, \ref{split_6}, and \ref{split_7_8}. 
\end{corollary}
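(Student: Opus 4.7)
The plan is to observe that this corollary is essentially a direct translation of Theorem \ref{split_add} through Proposition \ref{complementation}, so the only substantive check is that the class of split graphs is closed under complementation. I would first recall the standard fact that if $G$ is split with partition $V(G) = V_1 \cup V_2$ where $G[V_1]$ is a clique and $V_2$ is a stable set, then in $\overline{G}$ the set $V_1$ becomes a stable set and $V_2$ becomes a clique, so $\overline{G}$ is also split with the roles of the two parts swapped. This shows $\mathcal{G} = \{\text{split graphs}\}$ is closed under complementation.

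Next, I would apply Proposition \ref{complementation} to $\mathcal{G}$: it tells us that $H$ is a forbidden induced subgraph for $\Ga$ (the edge-add split graphs) if and only if $\overline{H}$ is a forbidden induced subgraph for $\Ge$ (the edge-apex split graphs). Equivalently, $G$ is a forbidden induced subgraph for the edge-apex split graphs if and only if $\overline{G}$ is a forbidden induced subgraph for the edge-add split graphs. Since complementation preserves the number of vertices, the bound $5 \leq |V(G)| \leq 8$ carries over from Theorem \ref{split_add} unchanged.

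Finally, I would invoke Theorem \ref{split_add} to conclude that $\overline{G}$ must be isomorphic to one of the listed graphs in Figures \ref{split_5}, \ref{split_6}, and \ref{split_7_8}, hence $G$ itself is isomorphic to the complement of one of these graphs. There is no real obstacle here, since the heavy lifting has already been done in Theorem \ref{split_add} (the combinatorial case analysis together with the computer enumeration). The only thing to verify — closure of split graphs under complementation — is a one-line observation, so the corollary follows immediately.
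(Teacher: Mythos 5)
Your proposal is correct and matches the paper's own argument exactly: the paper likewise notes that split graphs are closed under complementation and then derives the corollary immediately from Proposition \ref{complementation} together with Theorem \ref{split_add}. Your explicit verification of the closure under complementation (swapping the clique and stable set) is the only detail the paper leaves unstated, and it is handled correctly.
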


A graph $G$ is a {\bf $(p,q)$-split graph} if $V(G)$ can be partitioned into $V_1$ and $V_2$ such that $G[V_1]$ has independence number (size of its largest independent set) at most $p$, and $G[V_2]$ has  clique number at most $q$. Gy\'{a}rf\'{a}s \cite{gyarfas} showed that for fixed $p,q$, the class of $(p,q)$-split graphs is characterized by a finite set of forbidden induced subgraphs. Chudnovsky and Seymour \cite{CS} proved that every forbidden induced subgraph for the class of $(k,k)$-split graphs has at most $(k+1)^{2^{2k+1}}$ vertices. 

Following an analogous framework, we define a graph $G$ to be a {\bf $(p,q)$-edge split graph} if $V(G)$ can be partitioned into $V_1$ and $V_2$ such that $G[V_1]$ can be obtained from a clique by deleting at most $p$ edges, and $G[V_2]$ has at most $q$ edges. In other words, $G[V_1]$ is at most $p$ edge-additions away from being a clique while $G[V_2]$ is at most $q$ edge-deletions away from being an independent set. 

Observe that edge-add split graphs are $(1,0)$-edge split graphs while edge-apex split graphs are $(0,1)$-edge split graphs. Since the operations of edge-deletion and edge-addition commute, we obtain the following characterization of $(p,q)$-edge split graphs.

\begin{lemma}
\label{split_add_apex}
Let $\mathcal{S}$ be the class of split graphs, and let $p,q$ be nonnegative integers. Then the class of $(p,q)$-edge split graphs is equal to the $p$-edge-add class of the $q$-edge-apex class of $\mathcal{S}$. 
\end{lemma}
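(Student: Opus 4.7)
The plan is to verify the two inclusions directly, using the observation that edge additions and edge deletions act on disjoint vertex pairs (non-edges vs.\ edges), so they commute and the split partition carries through each step.

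For the forward inclusion, suppose $G$ is a $(p,q)$-edge-split graph with partition $V(G) = V_1 \cup V_2$, where $G[V_1]$ is a clique minus a set $F$ of at most $p$ edges and $G[V_2]$ has a set $E'$ of at most $q$ edges. First I would form $G' = G + F$ by adding the missing edges of the clique on $V_1$; since $F$ consists of non-edges of $G$ with both endpoints in $V_1$, each $f \in F$ is a legitimate non-edge addition, and $G'[V_1]$ is now a clique while $G'[V_2] = G[V_2]$. Then I would form $G'' = G' - E'$ by deleting the $\le q$ edges within $V_2$; this leaves $G''[V_1]$ a clique and makes $V_2$ a stable set, so $G''$ is split. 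Hence $G' = G'' + E'$ lies in the $q$-edge-apex class of $\mathcal{S}$, and $G = G' - F$ lies in the $p$-edge-add class of that class.

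For the reverse inclusion, suppose that $G$ lies in the $p$-edge-add class of the $q$-edge-apex class of $\mathcal{S}$. Then there is a set $F$ of at most $p$ non-edges of $G$ and a set $E'$ of at most $q$ edges of $G + F$ such that $G'' := (G+F) - E'$ is a split graph with some partition $V(G) = V_1 \cup V_2$, where $G''[V_1]$ is a clique and $G''[V_2]$ is a stable set. I would then analyse $G$ with respect to this same partition. The edges of $G[V_1]$ consist of the edges of $G''[V_1]$ together with those $e \in E'$ with both ends in $V_1$, minus those $f \in F$ with both ends in $V_1$; since $G''[V_1]$ is a complete graph, $G[V_1]$ is obtained from a clique by removing at most $|F| \le p$ edges. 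Similarly, the edges of $G[V_2]$ are contained in $E'$ restricted to $V_2$, so $G[V_2]$ has at most $|E'| \le q$ edges. Thus $G$ is $(p,q)$-edge-split.

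There is essentially no obstacle here; the only mild subtlety is keeping track of direction, namely that a non-edge added in $F \cap \binom{V_2}{2}$ (if any) only decreases the edge count of $G[V_2]$ relative to $G''[V_2] \cup (E' \cap \binom{V_2}{2})$, and similarly for deletions inside $V_1$. Because these two types of modifications affect disjoint kinds of vertex pairs, they commute and do not interfere with the partition, which is exactly why the characterisation reduces to two independent conditions on the two blocks.
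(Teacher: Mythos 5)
Your proof is correct and fills in exactly the argument the paper leaves implicit: the paper states only that the lemma is ``easy to verify'' because edge additions and edge deletions commute, and your two inclusions --- tracking the partition $V_1\cup V_2$ through the identity $E(G) = (E(G'')\cup E')\setminus F$ with $F$ confined in effect to pairs inside $V_1$ and $E'$ to pairs inside $V_2$ --- are precisely the intended verification. No gaps.
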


The following is now immediate using Corollaries \ref{q_edge_apexing} and \ref{p_edge_add}, and  Lemma \ref{split_add_apex}.

\begin{theorem}
The set of forbidden induced subgraphs for the class of $(p,q)$-edge split graphs is finite.
\end{theorem}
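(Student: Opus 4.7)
The plan is to observe that this is essentially a chaining argument, with the hard work already done in the earlier results. First I would recall that the class $\mathcal{S}$ of split graphs has a finite set of forbidden induced subgraphs, namely $\{2K_2, C_4, C_5\}$; this is a classical result of F\"oldes and Hammer that can be cited directly. In particular $\mathcal{S}$ satisfies the hypothesis of Corollaries \ref{p_edge_add} and \ref{q_edge_apexing}.

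Next I would apply Corollary \ref{q_edge_apexing} to $\mathcal{S}$ to conclude that the $q$-edge-apex class of $\mathcal{S}$, call it $\mathcal{S}_q$, is hereditary and has a finite set of forbidden induced subgraphs. Since $\mathcal{S}_q$ now satisfies the hypothesis of Corollary \ref{p_edge_add}, applying that result to $\mathcal{S}_q$ yields that the $p$-edge-add class of $\mathcal{S}_q$ is hereditary and has a finite set of forbidden induced subgraphs.

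Finally, I would invoke Lemma \ref{split_add_apex} to identify the $p$-edge-add class of $\mathcal{S}_q$ with the class of $(p,q)$-edge-split graphs, giving the desired conclusion. There is essentially no obstacle here: the main work was in proving Theorem \ref{edge_add_main}, in setting up the repeated-application Corollaries \ref{p_edge_add} and \ref{q_edge_apexing}, and in verifying the commutativity of edge-add and edge-delete operations that underlies Lemma \ref{split_add_apex}. The only thing one should double-check is the order of composition: because edge additions and edge deletions commute (and both preserve the vertex set), one is free to apply the $q$-edge-apex operator first and the $p$-edge-add operator second, so no loss of generality arises from the order chosen in Lemma \ref{split_add_apex}.
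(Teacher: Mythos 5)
Your proposal matches the paper's own argument: the paper derives this result immediately from Corollaries \ref{p_edge_add} and \ref{q_edge_apexing} applied to the class of split graphs (with its finite obstruction set $\{2K_2, C_4, C_5\}$), together with Lemma \ref{split_add_apex} identifying the composed class with the $(p,q)$-edge-split graphs. Your extra remarks on the order of composition and the commutativity of the two operations are exactly the content of Lemma \ref{split_add_apex}, so nothing is missing.
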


\end{document}